\pgfplotsset{compat=newest,
every axis/.append style={axis x line=bottom,
                          axis y line=left,
                          scale only axis,
                          y label style={at={(0.0,1.0)},anchor=south west,rotate=-90}
                          },
}
\title{Tensor train solution to uncertain optimization problems with shared sparsity penalty\thanks{HA is partially supported by NSF grant DMS-2408877, the AirForce Office of Scientific Research under Award NO: FA9550-22-1-0248, and Office of Naval Research (ONR) under Award NO: N00014-24-1-2147.
SD is thankful for the support from Engineering and Physical Sciences Research
Council (EPSRC) New Investigator Award EP/T031255/1.}}
\author{Harbir Antil
\and
Sergey Dolgov
\and
Akwum Onwunta
}
\institute{H. Antil \at
              The Center for Mathematics and Artificial Intelligence
(CMAI) and Department of Mathematical Sciences, George Mason University,
Fairfax, VA 22030, USA. \\
              \email{hantil@gmu.edu}           %  \\
           \and
           S. Dolgov \at
              Department of Mathematical Sciences, University of Bath, Bath, BA2 7AY, UK. \\
              \email{s.dolgov@bath.ac.uk}
           \and
           A. Onwunta \at
           Department of Industrial and Systems Engineering, Lehigh University, Bethlehem, PA 18015, USA. \\
           \email{ako221@lehigh.edu}
}
\begin{document}
\maketitle

\begin{abstract}
We develop both first and second order numerical optimization methods to solve non-smooth optimization problems featuring a shared sparsity penalty, constrained by differential equations with uncertainty.
To alleviate the curse of dimensionality we use tensor product approximations.
To handle the non-smoothness of the objective function we employ a smoothed version of the shared sparsity objective.
We consider both a benchmark elliptic PDE constraint, and a more realistic topology optimization problem in engineering.
We demonstrate that the error converges linearly in iterations and the smoothing parameter, and faster than algebraically in the number of degrees of freedom, consisting of the number of quadrature points in one variable and tensor ranks.
Moreover, in the topology optimization problem, the smoothed shared sparsity penalty actually reduces the tensor ranks compared to the unpenalised solution. This enables us to find a sparse high-resolution design under a high-dimensional uncertainty.
\keywords{Shared sparsity \and nonsmooth regularization \and penalization \and smoothing \and tensor train \and topology optimization}
\subclass{49J55 \and 93E20 \and 49K20 \and 49K45 \and 90C15 \and 65D15 \and 15A69 \and 15A23}
\end{abstract}

% \begin{MSCcodes}
% 49J55,  	%Existence of optimal solutions to problems involving randomness
% 93E20,  	%Optimal stochastic control
% 49K20,  	%Optimality conditions for problems involving partial differential equations
% 49K45,  	%Optimality conditions for problems involving randomness
% 90C15,  	%Stochastic programming
% 65D15,     %Algorithms for functional approximation
% 15A69,     %Multilinear algebra, tensor products
% 15A23.      %Factorization of matrices
% \end{MSCcodes}

%%%%%%%%%%%%%%%%%%%%%%
\section{Introduction}
%%%%%%%%%%%%%%%%%%%%%%

Constrained optimization is a standard approach in optimal design or control
where one needs to achieve a certain state of a mathematical model of a physical or technological process, often written in the form of partial differential equations (PDEs) nowadays, by minimizing a certain cost function, either fitting data or measuring the desired state (e.g. rigidity) in some other way. See e.g. the monographs \cite{Lions71,book::hpuu09,HAntil_DPKouri_MDLacasse_DRidzal_2018a} for further generic reading on this topic.
However, it becomes increasingly important to quantify the uncertainty emerging from the model and/or data, to both obtain and certify a control that is resilient to uncertainty, eventually paving the way towards `digital twins' \cite{HAntil_2024a,FAiraudo_HAntil_RLoehner_URakhimov_2024a}.
Such optimization problems are starting to receive a tremendous amount of attention.
There have been developments in all directions: modeling \cite{Chen_Haberman_Ghattas_2021}, analysis \cite{Milz_2023,Martin_Nobile_2021}, numerical
analysis \cite{Ali_Ullmann_Hinze_2017,Chen_Quarteroni_2014,Garreis_Ulbrich_2017,DPK2018}, numerical linear algebra \cite{Benner_Onwunta_Stoll_2016}, and algorithms \cite{Garreis_Ulbrich_2017}.

We consider an abstract PDE
\begin{equation}\label{eq:PDE_intro}
 c(y,u,\xi(\omega))=0,
\end{equation}
where $y \in \mathcal{Y}$ is the \emph{state} (function) belonging to a Hilbert state space $\mathcal{Y}$,
and $u\in\mathcal{U}_{ad}$ is the \emph{control} (function) belonging to the admissible control subset $\mathcal{U}_{ad} \subset \mathcal{U}$ of a Hilbert space $\mathcal{U}$.
The left hand side $c(y,u,\cdot) \in \mathcal{Z}$ where $\mathcal{Z}$ is a residual Hilbert space.
The state $y$ and control $u$ are maps from $D \times \Xi$, where
$D\subset \mathbb{R}^{\hat d}$ is a physical domain with Lipschitz boundary $\partial D$ (such as a volume in space), and $\Xi\subset \mathbb{R}^d$ is the image of the random vector $\xi(\omega) : \Omega \mapsto \Xi$.
The physical variable is denoted $x\in D$.
The admissible control set $\mathcal{U}_{ad} $ is assumed
to be closed, convex, and nonempty.
The random vector $\xi(\omega)$ models the uncertainty in the PDE model~\eqref{eq:PDE_intro}, usually its coefficients (e.g. viscosity), and is defined through a complete
probability space $(\Omega,\mathfrak{F},\mathbb{P})$, where $\Omega$ is a space of all possible outcomes,
$\mathfrak{F}\subset 2^{\Omega}$ is a $\sigma$-algebra on $\Omega$ and
$\mathbb{P}:\mathfrak{F}\rightarrow [0,1]$ is an appropriate probability measure.
Particular examples of such PDE models with uncertainty can be found in numerical experiments, Sections~\ref{sec:elliptic} and \ref{sec:topology}.
For simplicity of exposition, we make a finite-dimension noise assumption
encapsulated by the map $\xi$ from $\Omega$ to a finite dimensional domain $\Xi
= \xi(\Omega) \subset \mathbb{R}^d$ and a continuous probability density function (PDF)
$\rho(\xi): \Xi \rightarrow \mathbb{R}_+$.
The assumption of existence of the PDF allows us to express expectations of random vectors (and functions thereof) as integrals
$$
\mathbb{E}[g(\xi(\omega))] := \int_{\Xi} g(\xi) \rho(\xi) d\xi, \qquad g: \Xi \rightarrow \mathbb{R}.
$$
The decision making becomes a problem of optimizing a cost function,
\begin{equation}\label{eq:obj_intro}
	\min_{y\in\mathcal{Y},u\in \mathcal{U}_{ad}} \mathcal{J}(y,u), \quad
		 \mathcal{J}(y,u) := \mathbb{E}[J(y,\xi)]
		+ \mathbb{E}[S(u)]
		+ \mathcal{R}(u),
\end{equation}
constrained by \eqref{eq:PDE_intro}. Here, $J$ is the random-variable objective, $S$ and $\mathcal{R}$ respectively denote smooth and nonsmooth control regularizations.
For example, $S$ can be the standard squared $L^2$-norm of $u$ as used in the Tikhonov regularization, or other smooth (at least, continuously differentiable) function, whereas $\mathcal{R}$ is a nonsmooth regularization term, such as the $L^1$-norm of $u$, or the shared sparsity penalty we consider in this paper.

There exist two versions of the uncertain PDE-constrained optimal control problem \eqref{eq:PDE_intro}--\eqref{eq:obj_intro}.
Firstly, we can introduce random variables in the PDE, but seek a deterministic control $u(x)$ \cite{KHRW13,DPK2018,Garreis_Ulbrich_2017,KS16}. 
A straightforward objective function of the state thus depends on the random variables too.
Turning it into a deterministic total cost needs an expectation of either the objective itself (risk-neutral optimal control problem) or another function of the objective (risk-averse problem).
Such controls are easy to realize, but may be restrictive or lack robustness.
Secondly, we can introduce random variables in both state $y(x,\xi)$ and control $u(x,\xi)$ \cite{Chen_Quarteroni_2014,Benner_Onwunta_Stoll_2016,Tiesler_et_al_2012,Negri_et_al_2013}.
This provides uncertainty quantification for control,
but it is not immediately clear how to realize a random field control in practice.

An interesting compromise was introduced in \cite{Li_Stadler_2019}, which develops the notion of
`shared-sparsity'.
The idea is to consider an $L^1$-norm in the spatial (physical)
domain and an $L^2$-norm in the random variable, i.e.,
\begin{equation}\label{eq:R0}
	\mathcal{R}(u) = \beta \int_D \left(\int_{\Xi} |u(x,\xi)|^2 \rho(\xi) d\xi \right)^{\frac12} dx \, ,
\end{equation}
where $\beta \ge 0$ is the regularization parameter.
The optimization with a 1-norm penalty to promote sparsity (that is, to drive near-zero coefficients or gradients to be exactly zero) is a well-known technique in inverse problems \cite{Vogel-inv,Milz_2023} and optimal control problems \cite{GStadler_2009a,MR2914236}.
Here, we make the sparsity of the control (almost) deterministic, shared over all the realizations of the random variable.
Indeed, if $u(x,\xi)$ is nonzero on any set of a positive measure, the entire integral over $\xi$ in \eqref{eq:R0} is positive, and hence contributes a penalty to $\mathcal{R}(u).$
In turn, the optimal value of the latter is either sufficiently large or zero due to the $L^1$ norm.

On the other hand, an approximation of the entire random field $u(x,\xi)$ can be precomputed without knowing any actual state of the system. For this reason, these computations are called the \emph{offline stage}.
In contrast, the actual control loop where the precomputed $u(x,\xi)$ is evaluated and applied to the system is called the \emph{online stage}.
Precomputation of an approximate $u(x,\xi)$ in a compressed format significantly reduces the computational burden of evaluating the actual control signal, which is important for real-time control.
How exactly this signal is produced may vary.
One can apply the mean or quantile of $u(x,\xi)$, or estimate a particular $\xi$ from state measurements using e.g. Bayesian filtering \cite{jazwinski-stochastic-2007,arulampalam-tutorial-2002} and apply the control evaluated on this $\xi$.
Crucially, since the positions of the zeros in the control are (almost) deterministic, one can fix the actuators to the rest of the spatial domain independently of the system state.

We consider the optimization of topology (the layout of a material within a given design space ensuring certain properties such as rigidity) under uncertain Young's modulus of the material as a particularly interesting application of the `shared-sparsity' concept.
The goal is to find an optimal distribution of the material
of a total relative volume $\bar{V}$ over a given domain $D$ by minimizing the compliance
of a structure subject to elasticity equations as constraints. In this context, finding sparsity
corresponds to identifying locations where one must add material. Then in the online
stage, the structure can be designed using different realizations of the random variables,
all producing a robust design. Topology optimization under uncertainty have
received some attention recently \cite{Keshavarzzadeh-top-2021,Torres-topology-2021,duswald2024finite}, but these references do not focus on sparsity.

The main difficulty of uncertainty quantification is the curse of dimensionality: the number of random variables $d$ needed to parametrize the sought random field can be tens to thousands.
Direct discretization of each variable independently produces the number of unknowns growing exponentially in $d$, quickly becoming unfeasible.
Except for particularly low $d$ or smooth functions,
where one can use sparse grids for instance \cite{Tiesler_et_al_2012,Negri_et_al_2013}, almost all
the existing algorithms in the nonsmooth setting use Monte-Carlo based sampling, which may converge rather slowly and lead to a sheer computational complexity due to the need to solve a lot of PDEs during the optimization.

Another approach that can approximate high-dimensional functions (such as discretized random fields) is hierarchical tensor decompositions~\cite{hackbusch-2012}.
The idea is that we discretize a function using a Cartesian product of univariate basis functions, but instead of storing the tensor of expansion coefficients explicitly, we approximate it by a sum of products of low-dimensional factors.
Smooth functions~\cite{khor-low-rank-kron-P1-2006,uschmajew-approx-rate-2013}, or probability density functions of weakly correlated random variables~\cite{rdgs-tt-gauss-2022}, were proven to admit low tensor ranks of such approximations, e.g. depending poly-logarithmically on the approximation error.
Moreover, hierarchical tensor decompositions (specifically, the Tensor-Train (TT) decomposition~\cite{osel-tt-2011} we use in this paper) are equipped with fast adaptive cross approximation algorithms~\cite{ot-ttcross-2010,ds-parcross-2020}.

However, tensor approximations struggle with non-smooth functions.
To apply them to non-smooth optimization, 
\cite{Antil_Dolgov_Onwunta_CVaR2023,Antil_Dolgov_Onwunta_Statecon2023}
introduced smoothed objective functions such that the solution of the smoothed problem converges to the solution of the original problem as the smoothing parameter goes to zero,
whereas for any positive smoothing parameter the solution exhibits a rapidly converging TT approximation.

Similarly to \cite{Li_Stadler_2019}, we introduce
an $\varepsilon$-smoothing of the shared sparsity objective~\eqref{eq:R0}. Specifically, for any $\varepsilon>0$ we replace~\eqref{eq:R0} by
\begin{equation}\label{eq:Reps}
	\mathcal{R}_\varepsilon(u) = \beta \int_D \left(\int_{\Xi} u(x,\xi)^2 \rho(\xi) d\xi + \varepsilon^2 \right)^{\frac12} dx \,
\end{equation}
in the objective function~\eqref{eq:obj_intro}.

\subsection{Main contributions}
\begin{itemize}
 \item We develop an approximate Hessian of \eqref{eq:Reps} that can be approximated efficiently by using the TT Cross algorithm. This enables a fast Newton-type optimization method for objective functions with the shared-sparsity penalty.
 \item We benchmark both computational and modelling effects of the shared-sparsity penalty using the TT approximations on both an academic example of an elliptic PDE to study different aspects of the methodology, and a more realistic and large-scale application of the topology optimization under uncertainty.
\end{itemize}

The remainder of the article is organized as follows.
Section~\ref{sec:opt} details the smoothed shared sparsity objective, its derivatives, optimality 
conditions in both continuous and discrete settings, 
and the fully discrete approximate Newton's method with the fast application of the Hessian.
This formalism is independent of the particular approximation of functions and expectations over the random parameters.
However, for a practical method free from the curse of dimensionality,
we overview the Tensor-Train decomposition in Section~\ref{sec:tt}.
In Section~\ref{sec:num} we test our methodology using a couple of numerical examples: an elliptic PDE in Section~\ref{sec:elliptic} for faster computations and more illustrative behaviour of the method, and a robust topology optimization problem~\cite{Keshavarzzadeh-top-2021,Torres-topology-2021,Audouze-top-2023} in Section~\ref{sec:topology}.

\section{Discretization and optimality conditions}\label{sec:opt}
We assume that both $y(x,\xi)$ and $u(x,\xi)$ are approximated by $y_h(x,\xi)$ and $u_h(x,\xi)$ which are expanded by $\hat N$ basis functions $\psi_1(x),\ldots,\psi_{\hat N}(x)$ in the physical domain, discretised with a grid of the maximal element diameter $h>0$.
Without loss of generality, we can assume that the same basis functions are used for $y$ and $u$ (we can always concatenate different bases and expand both functions in the joint basis).
For the TT decomposition method we further assume that $y_h(x,\xi)$ and $u_h(x,\xi)$ are expanded by $N$ basis functions $\phi_1(\xi), \ldots, \phi_N(\xi)$ on the random variable.
Ultimately,
$$
y_h(x,\xi) = \sum_{i=1}^{\hat N} \sum_{j=1}^N y_{i,j} \psi_i(x) \phi_j(\xi), \qquad u_h(x,\xi) = \sum_{i=1}^{\hat N} \sum_{j=1}^N u_{i,j} \psi_i(x) \phi_j(\xi),
$$
and we can collect coefficients into vectors
\begin{equation}\label{eq:discr-vec}
\mathbf{y} = \left[y_{1,1}, \ldots, y_{\hat N, N}\right], \quad \mathbf{u} = \left[u_{1,1}, \ldots, u_{\hat N, N}\right].
\end{equation}
For Monte Carlo methods, we assume that $\xi$ is sampled in $N$ independent identically distributed points $\xi_1,\ldots,\xi_N$, so we can
expand
$$
y_h(x,\xi_j) = \sum_{i=1}^{\hat N} y_{i,j} \psi_i(x), \qquad u_h(x,\xi_j) = \sum_{i=1}^{\hat N} u_{i,j} \psi_i(x), \qquad j=1,\ldots,N,
$$
and collect the coefficients into vectors similarly to \eqref{eq:discr-vec}.

\subsection{Full space formulation}
First, we consider the Lagrangian formulation.
Let $\lambda(x,\xi) \in \mathcal{Z}^*$ be the Lagrange multiplier belonging to the dual of the residual Hilbert space $\mathcal{Z} \ni c(y,u,\cdot)$, so we can identify $\mathcal{Z}^*$ with $\mathcal{Z}$.
Then we need to find the KKT conditions of the Lagrangian
\begin{equation}\label{eq:lagr-cont}
\mathcal{L}(y,u,\lambda) := \mathbb{E}[J(y,\xi)]
		+ \mathbb{E}[S(u)]
		+ \mathcal{R}_{\varepsilon}(u) - \int_D \int_{\Xi} \lambda(x,\xi) c(y,u,\xi)\rho(\xi) d\xi dx.
\end{equation}
We assume that the discretized Lagrange multiplier $\lambda_h(x,\xi)$ is expanded in the same basis as $y_h$ and $u_h$.
Plugging in the discrete solutions into \eqref{eq:lagr-cont}, we obtain $\mathcal{L}_h(\mathbf{y},\mathbf{u},\boldsymbol\lambda) := \mathcal{L}(y_h,u_h,\lambda_h)$. Differentiating the latter with respect to the coefficient vectors $\mathbf{y},\mathbf{u},\boldsymbol\lambda \in \mathbb{R}^{\hat N N}$,
we get the state and adjoint equations, and an expression of the gradient
\begin{align}\label{eq:grad_y_lagr_discr}
(\nabla_{\mathbf{y}} \mathcal{L}_h)_{i,j} & = \int_D \int_{\Xi} \Big(\nabla_{y} J(y_h,\xi)  -   \lambda_h \nabla_y c(y_h,u_h,\xi)\Big) \psi_i \phi_j \rho d\xi dx = 0, \\
(\nabla_{\boldsymbol\lambda} \mathcal{L}_h)_{i,j} & = - \int_D \int_{\Xi} c(y_h,u_h,\xi) \psi_i \phi_j \rho d\xi dx = 0, \label{eq:grad_p_lagr_discr} \\
(\nabla_{\mathbf{u}} \mathcal{L}_h)_{i,j} & = \int_D \int_{\Xi} \Big(\nabla_{u} S(u_h) - \lambda_h \nabla_u c(y_h,u_h,\xi)\Big) \psi_i \phi_j \rho d\xi dx + (\nabla_{\mathbf{u}} \mathcal{R}_{\varepsilon})_{i,j}, \label{eq:grad_u_lagr_discr}
\end{align}
where $\nabla_y$ and $\nabla_u$ in the right-hand-side are Fr\'echet derivatives, in contrast to the gradient with respect to discrete coefficients in the left-hand-side.
The latter finite-dimensional gradients will be used for the TT method.
We emphasize that $(\nabla_{\mathbf{u}} \mathcal{L}_h)_{i,j}$ may not be equal to zero in general due to the constraints on the control. Instead, $(\nabla_{\mathbf{u}} \mathcal{L}_h)_{i,j}$ fulfills a standard variational inequality, see for instance \cite[Ch.~1, Theorem~2]{HAntil_DPKouri_MDLacasse_DRidzal_2018a}.
For the Monte Carlo method, we replace the integrals over $\xi$ in the Lagrangian by the averages over samples at $\xi_1,\ldots,\xi_N$:
\begin{align*}
& \mathcal{L}_N(\mathbf{y},\mathbf{u},\boldsymbol\lambda) := \\
& \frac{1}{N} \sum_{j=1}^N \left[ J(y_h,\xi_j)
        + S(u_h(x,\xi_j))
        - \int_D \lambda_h(x,\xi_j) c(y_h,u_h,\xi_j) dx \right]
        + \mathcal{R}_{\varepsilon,N}(\mathbf{u}),
\end{align*}
where
\begin{equation}\label{eq:RepsN}
\mathcal{R}_{\varepsilon,N}(\mathbf{u}) = \beta \int_D \left(\frac{1}{N}\sum_{j=1}^N u_h(x,\xi_j)^2 + \varepsilon^2 \right)^{\frac12} dx.
\end{equation}
Since $\xi_j$ for different $j$ are independent, and so are $u_{i,j}$ and $y_{i,j}$,
the sum over $j$ collapses in the following gradients:
\begin{align*}
(\nabla_{\mathbf{y}} \mathcal{L}_N)_{i,j} & = \frac{1}{N}\left[(\nabla_{\mathbf{y}} J)_{i,j} - \int_D \lambda_h(x,\xi_j) (\nabla_y c(y_h,u_h,\xi_j)) \psi_i dx\right], \\
(\nabla_{\boldsymbol\lambda} \mathcal{L}_N)_{i,j} & = -\frac{1}{N} \int_D c(y_h,u_h,\xi_j) \psi_i dx, \\
(\nabla_{\mathbf{u}} \mathcal{L}_N)_{i,j} & = \frac{1}{N}\left[(\nabla_{\mathbf{u}} S)_{i,j}  - \int_D \lambda_h(x,\xi_j) (\nabla_u c(y_h,u_h,\xi_j)) \psi_i dx \right] + (\nabla_{\mathbf{u}} \mathcal{R}_{\varepsilon,N})_{i,j}.
\end{align*}

\subsection{Derivatives of the shared sparsity penalty}
Both first and second derivatives of $J,S$ and $c$ will be instantiated in concrete examples to simplify their derivation.
Here, we are concerned with the derivatives of \eqref{eq:Reps} and \eqref{eq:RepsN}.

For the TT method, plugging in the discretized solution into \eqref{eq:Reps}, we obtain
\begin{align}
\mathcal{R}_{\varepsilon}(u_h) & = \beta \int_D \left(\int_{\Xi} \left(\sum_i \sum_j u_{i,j} \psi_i(x) \phi_j(\xi)\right)^2 \rho(\xi) d\xi + \varepsilon^2 \right)^{\frac12} dx, \\
(\nabla_{\mathbf{u}}\mathcal{R}_{\varepsilon})_{i,j} & = \beta \int_D \frac{\int_{\Xi} \left(\sum_{i'} \sum_{j'} u_{i',j'} \psi_{i'}(x) \phi_{j'}(\xi)\right) \psi_i(x) \phi_j(\xi) \rho(\xi) d\xi}{\sqrt{\int_{\Xi} \left(\sum_{i'} \sum_{j'} u_{i',j'} \psi_{i'}(x) \phi_{j'}(\xi)\right)^2 \rho(\xi) d\xi + \varepsilon^2}} dx.
\end{align}
In practice, we can always use the Lagrange interpolation basis functions $\phi_1(\xi),\ldots,\phi_N(\xi)$, centered at nodes $\xi_1,\ldots,\xi_N$ of a multivariate Gaussian grid, such that $\phi_j(\xi_{j'}) = \delta_{j,j'},$
where $\delta_{j,j'}$ is the Kronecker symbol equal to $1$ when $j=j'$ and $0$ otherwise.
The integral over $\xi$ can then be approximated by a quadrature,
$$
\int_{\Xi} f(\xi) \rho(\xi) d\xi \approx \sum_{j=1}^{N} w_j f(\xi_j),
$$
where $w_1,\ldots,w_N$ are quadrature weights.
Moreover, the Gaussian quadrature is exact for polynomials of degree up to $2N-1$.
This gives
$$
\int_{\Xi} \left(\sum_i \sum_j u_{i,j} \psi_i(x) \phi_j(\xi)\right)^2 \rho(\xi) d\xi = \sum_{j=1}^{N} w_j \left[\sum_{i,i'=1}^{\hat N} \left[u_{i,j} u_{i',j} \psi_i(x) \psi_{i'}(x)\right]\right].
$$

The integral over $x$ is taken of a non-polynomial function, and needs a quadrature too.
The usual finite element practice is to take $\psi_1(x),\ldots,\psi_{\hat N}(x)$ to be \emph{nodal} on a grid $x_1,\ldots,x_N$, such that $\psi_i(x_{i'}) = \delta_{i,i'},$
and $\psi_i(x)$ is zero outside of some neighbourhood of $x_i$.
We can introduce a quadrature on the same nodes,
$$
\int_{D} f(x) dx \approx \sum_{i=1}^{\hat N} \hat w_i f(x_i).
$$
For example, the Trapezoidal rule with piecewise linear basis functions satisfies this assumption and gives the second order of approximation.
This gives
\begin{align}
\mathcal{R}_{\varepsilon,h}(\mathbf{u}) & = \beta \sum_{i=1}^{\hat N} \hat w_i \sqrt{ \sum_{j=1}^{N} \left[w_j u_{i,j}^2\right] + \varepsilon^2}, \\ \label{eq:RepsGrad}
(\nabla_{\mathbf{u}}\mathcal{R}_{\varepsilon,h})_{i,j} & = \beta \hat w_i \frac{u_{i,j} w_j}{\sqrt{ \sum_{j'=1}^{N} \left[w_{j'} u_{i,j'}^2\right] + \varepsilon^2}}, \\
(\nabla^2_{\mathbf{u}\mathbf{u}}\mathcal{R}_{\varepsilon,h})_{i,j,i',j'} & = \frac{\beta \hat w_i w_j \delta_{j,j'} \delta_{i,i'}}{\sqrt{ \sum_{j''=1}^{N} \left[w_{j''} u_{i,j''}^2\right] + \varepsilon^2}} - \frac{\beta \hat w_i w_j  w_{j'} u_{i,j} u_{i,j'} \delta_{i,i'}}{\left(\sum_{j''=1}^{N} \left[w_{j''} u_{i,j''}^2\right] + \varepsilon^2\right)^{\frac32}}. \label{eq:RepsHess}
\end{align}
For the Monte Carlo method, $\nabla_{\mathbf{u}}\mathcal{R}_{\varepsilon,N}$ and $\nabla^2_{\mathbf{u}\mathbf{u}}\mathcal{R}_{\varepsilon,N}$ have the same form as \eqref{eq:RepsGrad} and \eqref{eq:RepsHess}, respectively, with $w_j = 1/N$.

\subsection{Linear-Quadratic problem}
Practical expressions are more convenient to write assuming linear constraints and quadratic costs.
Let us assume that the spaces $\mathcal{Y},\mathcal{Z}$ and $\mathcal{U}$ are Bochner spaces
$\mathcal{Y} = L^2_{\rho}(\Xi; Y),$
$\mathcal{Z} = L^2_{\rho}(\Xi; Z),$
and
$\mathcal{U} = L^2_{\rho}(\Xi; U)$,
where $Y,Z$ and $U$ are Hilbert spaces of functions of $x$ only (for example, $L^2(D)$),
and
$L^2_{\rho}$ is a space of functions of $\xi$ with the $L^2$ inner product weighted with the probability density function $\rho(\xi)$.
Now we can assume that
\begin{align*}
J(y,\xi) & = \frac{1}{2}\|y - y_{d}(x,\xi)\|^2_{M_y}, &
S(u) & = \frac{\alpha}{2}\|u\|^2_{M_u},  \\
c(y,u,\xi) & = A(\xi) y - B(\xi) u - f(\xi),
\end{align*}
where $M_y: Y \rightarrow Y$ is a self-adjoint positive semi-definite operator,
$y_d(x,\xi) \in \mathcal{Y}$ is a desired state,
$M_u: U \rightarrow U$ is a self-adjoint positive definite operator,
$\|f\|_M := \sqrt{\langle f, M f \rangle_{L^2(D)}}$ is the induced $M$-norm,
$\alpha\ge 0$ is a regularization parameter,
$A(\xi): Y \rightarrow Z$ for any $\xi \in \Xi$ is a linear PDE operator on the state,
$B(\xi): U \rightarrow Z$ is a linear actuator operator of the control,
and $f(\xi) \in Z$ is an uncontrollable right hand side of the PDE.
In this case,
\begin{align*}
\nabla_{y} J(y_h,\xi) & = M_y \left(y_h(x,\xi) - y_d(x,\xi)\right), \\
\nabla_{u} S(u_h) & =  \alpha M_u u_h(x,\xi), \\
\nabla_y c & = A(\xi), \\
\nabla_u c & = -B(\xi).
\end{align*}
To assemble \eqref{eq:grad_y_lagr_discr}--\eqref{eq:grad_u_lagr_discr},
let $\langle \cdot, \cdot \rangle_{V,W}$ be the standard duality pairing between Banach spaces $V$ and $W$, and
introduce the mass matrices with elements
\begin{align*}
(\mathbf{M}_y)_{i,i'} & = \langle \psi_i,  M_y \psi_{i'} \rangle_{Y,Y}, & (\mathbf{M}_u)_{i,i'}
& = \langle \psi_i,  M_u \psi_{i'} \rangle_{U,U},
\end{align*}
the desired state vector $(\mathbf{y}_d(\xi))_i = \langle \psi_i, y_d(\cdot,\xi) \rangle_{Y,Y}$,
the stiffness matrices with elements
\begin{align*}
(\mathbf{A}(\xi))_{i,i'} & = \langle \psi_i,  A(\xi) \psi_{i'}\rangle_{Y,Z}, & (\mathbf{B}(\xi))_{i,i'} & = \langle \psi_i,  B(\xi) \psi_{i'}\rangle_{Y,Z},
\end{align*}
and the right hand side $(\mathbf{f}(\xi))_i = \langle \psi_i, f(\cdot,\xi)\rangle_{Y,Z}$.
Notice that for simplicity of presentation, we are using the same basis functions $\{\psi_i\}$ to
discretize $Y$, $U$, and $Z$ spaces.
Moreover, an appropriate weak formulation can be introduced for the stiffness matrices.
Replacing the integral over $\xi$ by the quadrature,
we can write the fully discrete Lagrangian gradient
\begin{align}
(\nabla_{\mathbf{y}} \mathcal{L}_h)_{i,j} & = w_j \sum_{i'=1}^{\hat N} \left[(\mathbf{M}_y)_{i,i'} y_{i',j}  - \lambda_{i',j} (\mathbf{A}(\xi_j))_{i',i}\right] - w_j(\mathbf{y}_d(\xi_j))_{i}, \\
(\nabla_{\mathbf{u}} \mathcal{L}_h)_{i,j} & = w_j \sum_{i'=1}^{\hat N}\left[\alpha(\mathbf{M}_u)_{i,i'} u_{i',j} + \lambda_{i',j} (\mathbf{B}(\xi_j))_{i',i}\right] + (\nabla_{\mathbf{u}}\mathcal{R}_{\varepsilon,h})_{i,j}, \\
(\nabla_{\boldsymbol\lambda} \mathcal{L}_h)_{i,j} & = w_j \sum_{i'=1}^{\hat N}\left[-(\mathbf{A}(\xi_j))_{i,i'} y_{i',j} + (\mathbf{B}(\xi_j))_{i,i'} u_{i',j} + (\mathbf{f}(\xi_j))_i \right]
\end{align}
to be used for the TT method.
The expressions for the Monte Carlo method are the same replacing $w_j$ by $1/N$ and $\mathcal{R}_{\varepsilon,h}$ by $\mathcal{R}_{\varepsilon,N}$.
To write the Hessian in a more convenient matrix form,
let
\begin{align*}
\boldsymbol A &= \begin{bmatrix}\mathbf{A}(\xi_1) \\ & \ddots \\ & & \mathbf{A}(\xi_N)\end{bmatrix} \in\mathbb{R}^{N\hat N \times N \hat N},
&
\boldsymbol B &= \begin{bmatrix}\mathbf{B}(\xi_1) \\ & \ddots \\ & & \mathbf{B}(\xi_N)\end{bmatrix} \in\mathbb{R}^{N\hat N \times N \hat N}
\end{align*}
be block-diagonal matrices,
\begin{align*}
\boldsymbol W & = \begin{bmatrix}w_1 \\ & \ddots \\ & & w_N\end{bmatrix} \otimes I_{\hat N}, &
\boldsymbol f & = \begin{bmatrix}\mathbf{f}(\xi_1) \\ \vdots \\ \mathbf{f}(\xi_N)
\end{bmatrix},
\quad
\boldsymbol y_d = \begin{bmatrix}\mathbf{y}_d(\xi_1) \\ \vdots \\ \mathbf{y}_d(\xi_N)
\end{bmatrix},
\end{align*}
where $I_m \in \mathbb{R}^{m \times m}$ is the identity matrix, and $\otimes$ is the Kronecker product, then
$$
\nabla^2 \mathcal{L}_h = \begin{bmatrix}
\boldsymbol W (I_N \otimes \mathbf{M}_y) & 0 & -\boldsymbol W \boldsymbol A^\top \\
0 & \boldsymbol W(\alpha I_N \otimes \mathbf{M}_u) + \nabla^2_{\mathbf{u}\mathbf{u}}\mathcal{R}_{\varepsilon,h} & \boldsymbol W\boldsymbol B^\top \\
-\boldsymbol W\boldsymbol A & \boldsymbol W\boldsymbol B & 0
\end{bmatrix},
$$
with a similar replacement of $w_j=1/N$ for the Monte Carlo Hessian $\nabla^2 \mathcal{L}_N.$

\subsection{Approximate block-diagonal Hessian and Newton's method}
All terms in the Hessian above are block-diagonal with respect to $j$ except $\nabla^2_{\mathbf{u}\mathbf{u}}\mathcal{R}_{\varepsilon,h}$.
In turn, only the second term in \eqref{eq:RepsHess} breaks this block-diagonality.
Moreover, while both terms of \eqref{eq:RepsHess} have the same asymptotic scaling in $u$ ($\mathcal{O}(1/|u|)$ for $|u_{i,j}| \ge |u| \gg \varepsilon$ and $\mathcal{O}(1/\varepsilon)$ for $|u_{i,j}| \ll \varepsilon$),
the second term has an additional factor $w_{j'}<1$, since $w_{j}>0$ and $\sum_j w_j=1$ for the probability normalization.
In fact, $w_{j'} = 1/N$ for the Monte Carlo method, and $w_{j'} \le C/N \ll 1$ for the TT method, since typically $N \gg 1$ for both TT and Monte Carlo.
Thus, we omit the second term of \eqref{eq:RepsHess}, and approximate $\nabla^2_{\mathbf{u}\mathbf{u}}\mathcal{R}_{\varepsilon,h}$ by $\tilde \nabla^2_{\mathbf{u}\mathbf{u}}\mathcal{R}_{\varepsilon,h}$ whose components are given by
\begin{align}
(\tilde \nabla^2_{\mathbf{u}\mathbf{u}}\mathcal{R}_{\varepsilon,h})_{i,j,i',j'} & = \frac{\beta \hat w_i \delta_{j,j'} \delta_{i,i'}}{\sqrt{ \sum_{j''=1}^{N} \left[w_{j''} u_{i,j''}^2\right] + \varepsilon^2}}, \\
\tilde \nabla^2 \mathcal{L}_h & = (I_3 \otimes \boldsymbol W)\begin{bmatrix}
I_N \otimes \mathbf{M}_y & 0 & - \boldsymbol A^\top \\
0 & \alpha I_N \otimes \mathbf{M}_u + \tilde \nabla^2_{\mathbf{u}\mathbf{u}}\mathcal{R}_{\varepsilon,h} & \boldsymbol B^\top \\
-\boldsymbol A & \boldsymbol B & 0
\end{bmatrix}.\label{eq:HessApprox}
\end{align}
Note that now all terms contain $w_j$ in front of them,
so we can take $\boldsymbol W$ out of the Hessian,
which will cancel with $\boldsymbol W$ in front of the gradient
\begin{align}\label{eq:GradYVec}
\nabla_{\mathbf{y}} \mathcal{L}_h & = \boldsymbol W \left[(I_N \otimes \mathbf{M}_y)  \mathbf{y}  - \boldsymbol A^\top \boldsymbol\lambda - \boldsymbol y_d\right], \\
\nabla_{\mathbf{u}} \mathcal{L}_h & = \boldsymbol W \left[(\alpha I_N \otimes \mathbf{M}_u) \mathbf{u} + \boldsymbol B^\top \boldsymbol\lambda + \tilde \nabla_{\mathbf{u}}\mathcal{R}_{\varepsilon,h} \right], \\
\nabla_{\boldsymbol\lambda} \mathcal{L}_h & =  \boldsymbol W \left[-\boldsymbol A \mathbf{y} + \boldsymbol B \mathbf{u} + \boldsymbol f \right],  \label{eq:GradLVec}
\end{align}
where a gradient without $w_j$ reads
$$
(\tilde \nabla_{\mathbf{u}}\mathcal{R}_{\varepsilon,h})_{i,j}  = \beta \hat w_i \frac{u_{i,j}}{\sqrt{ \sum_{j'=1}^{N} \left[w_{j'} u_{i,j'}^2\right] + \varepsilon^2}}.
$$
This allows us to write Newton's method as summarized in Algorithm~\ref{alg:newton}.
Note that we rewrite the update as a linear equation on the next iterate directly.
This will be beneficial for the TT computations in Section~\ref{sec:cross}.
This also uses the step size of $1$.
For globalization, one can rewrite Steps~\ref{alg:newton:resid}--\ref{alg:newton:solve} in the usual increment form, followed by a line search, although the TT ranks of the increment $\mathbf{s}^{[k+1]}-\mathbf{s}^{[k]}$ can be larger than the TT ranks of $\mathbf{s}^{[k]}$ or $\mathbf{s}^{[k+1]}$ alone.
The linear solution (Step~\ref{alg:newton:solve})
can be implemented in the TT format in a problem-dependent way.
One example is shown in the next section.

\begin{algorithm}[htb]
\caption{Approximate Newton's method for the shared-sparsity optimization}
\label{alg:newton}
\begin{algorithmic}[1]
\Require Initial guess $\mathbf{y}^{[0]}, \mathbf{u}^{[0]}, \boldsymbol\lambda^{[0]}$, stopping tolerance $\mathrm{tol}>0$, maximum number of iterations $K$.
\Ensure Updated iterations $\mathbf{y}^{[k]}, \mathbf{u}^{[k]}, \boldsymbol\lambda^{[k]}$.
\For{$k=0,1,\ldots,K-1$}
  \State Let $\mathbf{s}^{[k]} = \begin{bmatrix}(\mathbf{y}^{[k]})^\top, (\mathbf{u}^{[k]})^\top, (\boldsymbol\lambda^{[k]})^\top\end{bmatrix}^\top$ be the previous solution vector.
  \State Assemble $\tilde \nabla^2 \mathcal{L}_h$ and $\mathbf{b} := \tilde \nabla^2 \mathcal{L}_h \mathbf{s}^{[k]} - \nabla \mathcal{L}_h$ with $\mathbf{y}=\mathbf{y}^{[k]}$, $\mathbf{u}=\mathbf{u}^{[k]}$ and $\boldsymbol\lambda=\boldsymbol\lambda^{[k]}$ as shown in \eqref{eq:HessApprox} and \eqref{eq:GradYVec}--\eqref{eq:GradLVec}. \label{alg:newton:resid}
  \State Solve $\tilde \nabla^2 \mathcal{L}_h \mathbf{s}^{[k+1]}  = \mathbf{b}$.\label{alg:newton:solve}
  \State Split the components $\begin{bmatrix}(\mathbf{y}^{[k+1]})^\top, (\mathbf{u}^{[k+1]})^\top, (\boldsymbol\lambda^{[k+1]})^\top\end{bmatrix}^\top = \mathbf{s}^{[k+1]}.$
  \If{$\|\mathbf{s}^{[k+1]} - \mathbf{s}^{[k]}\| \le \mathrm{tol} \cdot \|\mathbf{s}^{[k+1]}\|$}
    \State \textbf{break}.
  \EndIf
\EndFor
\end{algorithmic}
\end{algorithm}

\subsection{Reduced space formulation}
Assuming that for every $u \in \mathcal{U}_{ad}$ there is a unique
solution $y(u; x,\xi) \in \mathcal{Y}$ to \eqref{eq:PDE_intro}, we can introduce
a control-to-state map $\mathcal{U}_{ad} \ni u \mapsto
\Upsilon(u)(x,\xi) := y(u; x,\xi)$, and the reduced version of the cost
\eqref{eq:obj_intro}:
\begin{equation}\label{eq:RP}
	\min_{u\in \mathcal{U}_{ad}} j(u),
		\quad  j(u) := \mathbb{E}[J(\Upsilon(u),\xi)]
		+ \mathbb{E}[S(u)]
		+ \mathcal{R}_{\varepsilon}(u).
\end{equation}
The reduced cost and its derivatives can be discretized using the basis or Monte Carlo methods from above,
and the quadrature weights $w_j$ or $1/N$ can be cancelled in the Hessian and gradient.

Existence of the solution to~\eqref{eq:RP} and convergence of the smoothed approximation was established in~\cite{Li_Stadler_2019} for linear constraints.
\begin{proposition}[\cite{Li_Stadler_2019}, Theorem~4 and Corollary~5]
For any $\beta \ge 0,$ $\varepsilon \ge 0,$ and $\alpha>0$, if $A: \mathcal{Y} \subset H^1(D) \rightarrow H^{-1}(D)$ is linear and invertible, and $y_d \in \mathcal{Y}$,
there exists a unique solution to
\begin{equation}\label{eq:RP_lin}
\min_{u\in \mathcal{U}_{ad}} \int_D \left[\int_{\Xi} (A^{-1} (Bu+f) - y_d)^2 + \alpha u^2 \right]  dx \rho(\xi) d\xi + \mathcal{R}_{\varepsilon}(u).
\end{equation}
\end{proposition}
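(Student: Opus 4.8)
The plan is to invoke the direct method of the calculus of variations, exploiting the convex structure of the objective. Write $\mathcal{U} = L^2_\rho(\Xi; L^2(D))$ and denote the objective by $j(u)$, the sum of the tracking term $T(u) := \|A^{-1}(Bu+f) - y_d\|^2_{L^2}$, the Tikhonov term $\alpha\|u\|_{\mathcal{U}}^2$, and the smoothed penalty $\mathcal{R}_\varepsilon(u)$. I would first establish three structural properties of $j$ on the admissible set $\mathcal{U}_{ad}$: coercivity, strict convexity, and weak lower semicontinuity. Existence then follows from a standard minimizing-sequence argument, and uniqueness from strict convexity.

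For convexity, note that since $A$ is linear and invertible, the bounded inverse theorem gives $A^{-1}: H^{-1}(D) \to H^1(D)$ bounded, so $u \mapsto A^{-1}(Bu+f) - y_d$ is a bounded affine map and $T$ is a convex quadratic. The term $\alpha\|u\|^2_{\mathcal{U}}$ is strictly convex whenever $\alpha > 0$, which alone forces strict convexity of the whole objective. For the penalty, I would write the integrand pointwise in $x$ as $h(\|u(x,\cdot)\|_{L^2_\rho})$ with $h(t) = \sqrt{t^2 + \varepsilon^2}$; since $h$ is convex and nondecreasing on $[0,\infty)$ and $v \mapsto \|v\|_{L^2_\rho}$ is convex, the composition is convex, and integration over $x$ preserves convexity, so $\mathcal{R}_\varepsilon$ is convex for every $\varepsilon \geq 0$.

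Coercivity is immediate from $j(u) \geq \alpha \|u\|^2_{\mathcal{U}}$, since the remaining terms are nonnegative and $\alpha > 0$. For weak lower semicontinuity, I would argue that $j$ is convex and strongly continuous, hence weakly lsc; the only nonroutine point is continuity of $\mathcal{R}_\varepsilon$, which follows from the uniform-in-$\varepsilon$ bound $\int_D \|u(x,\cdot)\|_{L^2_\rho}\, dx \leq |D|^{1/2}\|u\|_{\mathcal{U}}$ obtained by Cauchy--Schwarz. With these in hand, take a minimizing sequence $(u_n) \subset \mathcal{U}_{ad}$; coercivity bounds $\|u_n\|_{\mathcal{U}}$, reflexivity of the Hilbert space $\mathcal{U}$ yields a weakly convergent subsequence $u_n \rightharpoonup u^*$, and the closed convex set $\mathcal{U}_{ad}$ is weakly closed, so $u^* \in \mathcal{U}_{ad}$; weak lsc then gives $j(u^*) \leq \liminf j(u_n) = \inf_{\mathcal{U}_{ad}} j$, so $u^*$ is a minimizer. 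Strict convexity rules out a second minimizer.

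The main obstacle is the penalty term $\mathcal{R}_\varepsilon$, particularly in the nonsmooth case $\varepsilon = 0$: I must confirm it is well-defined, finite on all of $\mathcal{U}$, and weakly lower semicontinuous. The pointwise-in-$x$ convexity argument together with the Cauchy--Schwarz bound handles both, but one should check measurability of the iterated integral $\int_D (\int_\Xi u^2 \rho\, d\xi)^{1/2}\, dx$ and justify passing to the weak limit inside the spatial integral; this is where a careful appeal to Fatou's lemma on the inner $\xi$-integral, or the general fact that convex strongly-continuous functionals on a reflexive space are weakly lsc, does the real work. The boundedness of $A^{-1}$ is the other ingredient not to be taken for granted, but it is supplied directly by the invertibility hypothesis.
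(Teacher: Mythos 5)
The paper does not prove this proposition itself---it is imported verbatim from Li and Stadler (Theorem~4 and Corollary~5 of the cited reference)---and your direct-method argument (coercivity from the Tikhonov term with $\alpha>0$, strict convexity, convexity of $\mathcal{R}_\varepsilon$ via the composition $h(t)=\sqrt{t^2+\varepsilon^2}$ with the inner $L^2_\rho$-norm, Lipschitz continuity of $\mathcal{R}_\varepsilon$ from Cauchy--Schwarz, hence weak lower semicontinuity, then a minimizing sequence in the weakly closed convex set $\mathcal{U}_{ad}$) is exactly the standard proof that the cited result relies on. Your reasoning is correct and complete in all essentials; no gap to report.
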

\begin{proposition}[\cite{Li_Stadler_2019}, Lemma~6]
Let $u_{\varepsilon}$ be the solution to \eqref{eq:RP_lin} with the smoothing parameter $\varepsilon \ge 0$,
and let $u_{0}$ be the solution to \eqref{eq:RP_lin} with $\varepsilon=0$.
If $D$ is bounded, then
$$
\|u_{\varepsilon} - u_0\|^2_{L^2_{\rho}(\Xi; L^2(D))} \le \varepsilon \beta \alpha^{-1} |D|.
$$
\end{proposition}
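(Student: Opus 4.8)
The plan is to exploit that the only difference between the $\varepsilon$-problem and the $\varepsilon=0$ problem in~\eqref{eq:RP_lin} is the penalty term, while the smooth data-misfit and Tikhonov parts are identical, and that the Tikhonov term $\alpha\int_{\Xi}\int_D u^2\,dx\,\rho\,d\xi = \alpha\|u\|^2_{L^2_{\rho}(\Xi;L^2(D))}$ renders the objective strongly convex. Write $j_\varepsilon(u)$ for the objective in~\eqref{eq:RP_lin} with smoothing parameter $\varepsilon\ge 0$, and split it as $j_\varepsilon(u) = g(u) + \alpha\|u\|^2_{L^2_{\rho}(\Xi;L^2(D))}$, where $g$ collects the convex data-misfit term and the penalty $\mathcal{R}_\varepsilon$. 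I would rely on two elementary ingredients: a uniform-in-$u$ comparison of the two penalties, and a quadratic growth estimate at each minimizer coming from the $\alpha$-term. Existence and uniqueness of $u_\varepsilon$ and $u_0$ are already guaranteed by the preceding proposition, so both minimizers may be taken as given.

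For the penalty comparison, I would apply the pointwise subadditivity $\sqrt{a+\varepsilon^2}\le\sqrt{a}+\varepsilon$ for $a\ge 0$ with $a=\int_{\Xi}u(x,\xi)^2\rho\,d\xi$ at each $x\in D$. Integrating the resulting bound $0\le (a+\varepsilon^2)^{1/2}-a^{1/2}\le\varepsilon$ against $\beta\,dx$ over the bounded domain $D$ gives, for every $u$,
\begin{equation*}
0 \le \mathcal{R}_\varepsilon(u)-\mathcal{R}_0(u) \le \beta\varepsilon|D|.
\end{equation*}
For the quadratic growth, I would first observe that $g$ is convex: the misfit term is the square of an affine map of $u$ (since $A^{-1}$ and $B$ are linear), and the smoothed penalty is convex because $(\|u(x,\cdot)\|^2_{L^2_{\rho}}+\varepsilon^2)^{1/2}$ equals the norm of $(u(x,\cdot),\varepsilon)$ in $L^2_{\rho}(\Xi)\times\mathbb{R}$, with convexity preserved under integration over $x$. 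Combining convexity of $g$ with the first-order optimality of the constrained minimizer $u_\varepsilon$ (testing with feasible directions $u_\varepsilon+t(u-u_\varepsilon)\in\mathcal{U}_{ad}$) and the exact identity $\|u\|^2=\|u_\varepsilon\|^2+2\langle u_\varepsilon,u-u_\varepsilon\rangle+\|u-u_\varepsilon\|^2$ yields the growth bound
\begin{equation*}
j_\varepsilon(u) - j_\varepsilon(u_\varepsilon) \ge \alpha\,\|u-u_\varepsilon\|^2_{L^2_{\rho}(\Xi;L^2(D))} \qquad \text{for all } u\in\mathcal{U}_{ad}.
\end{equation*}

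Finally, I would combine the two ingredients through a sandwich of objective values. Applying the growth bound with $u=u_0$ gives $\alpha\|u_0-u_\varepsilon\|^2\le j_\varepsilon(u_0)-j_\varepsilon(u_\varepsilon)$. Using $j_\varepsilon=j_0+(\mathcal{R}_\varepsilon-\mathcal{R}_0)$ together with the penalty comparison, I bound the right-hand side: $j_\varepsilon(u_0)\le j_0(u_0)+\beta\varepsilon|D|$, while $j_\varepsilon(u_\varepsilon)\ge j_0(u_\varepsilon)\ge j_0(u_0)$, where the first inequality uses $\mathcal{R}_\varepsilon\ge\mathcal{R}_0$ and the second uses optimality of $u_0$ for $j_0$. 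Subtracting gives $j_\varepsilon(u_0)-j_\varepsilon(u_\varepsilon)\le\beta\varepsilon|D|$, hence $\alpha\|u_0-u_\varepsilon\|^2\le\beta\varepsilon|D|$, which is the claim. I expect the main obstacle to be the careful justification of the quadratic growth inequality in the constrained, nonsmooth setting — in particular verifying convexity of the smoothed penalty and correctly handling the variational inequality at the minimizer — rather than the penalty comparison, which is elementary; the role of strong convexity is precisely to convert the $O(\varepsilon)$ closeness of objective values into an $O(\varepsilon)$ bound on the squared distance of the minimizers.
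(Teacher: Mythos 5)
Your argument is correct and complete. Note that the paper itself does not prove this statement --- it is quoted verbatim from Li and Stadler (Lemma~6 of the cited reference) --- so there is no in-paper proof to compare against; your route (the uniform penalty comparison $0\le\mathcal{R}_\varepsilon(u)-\mathcal{R}_0(u)\le\beta\varepsilon|D|$ from $\sqrt{a+\varepsilon^2}\le\sqrt{a}+\varepsilon$, combined with the quadratic growth $j_\varepsilon(u)-j_\varepsilon(u_\varepsilon)\ge\alpha\|u-u_\varepsilon\|^2$ supplied by the Tikhonov term, then the sandwich $j_\varepsilon(u_0)-j_\varepsilon(u_\varepsilon)\le\beta\varepsilon|D|$) is the standard perturbation argument for strongly convex objectives and is exactly how such bounds are obtained. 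Two small points worth making explicit in a write-up: the constant works out only because \eqref{eq:RP_lin} carries $\alpha u^2$ rather than $\tfrac{\alpha}{2}u^2$ (so the strong-convexity modulus is $2\alpha$ and the growth constant is $\alpha$, matching $\varepsilon\beta\alpha^{-1}|D|$); and the variational inequality at $u_\varepsilon$ should be stated for a subgradient of the full convex functional $j_\varepsilon$ on $\mathcal{U}_{ad}$, which is legitimate since $\mathcal{R}_\varepsilon$ is convex (it is the integral over $x$ of the norm of $(u(x,\cdot),\varepsilon)$) even at $\varepsilon=0$ where it is nonsmooth.
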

Taking the square root of both sides, we can notice that the upper bound of the error in the control decays proportionally to $\varepsilon^{1/2}.$

\section{Functional Tensor Train approximation}\label{sec:tt}
In the course of optimization iterations we need to compute expectations of random vectors.
Traditional Monte Carlo method may suffer from slow convergence.
In this work we propose to approximate random vectors by the Functional Tensor Train depending on $\xi$.

We start with a Cartesian product discretization.
We assume that the parametrizing random variables $\xi_1,\ldots,\xi_d$ are independent, so their probability density function factorizes, $\rho(\xi) = \rho_1(\xi_1) \cdots \rho_d(\xi_d)$.
For each $\xi_k$, $k=1,\ldots,d,$ we introduce a Gauss quadrature weighted with $\rho_k$, with $n$ nodes $\{\xi_k^j\}_{j=1}^n$ and weights $\{w_k^j\}_{j=1}^n$.
The expectation of any scalar continuous function is approximated by the combined quadrature
\begin{equation}\label{eq:tpq}
\mathbb{E}\left[f(\xi)\right] \approx \sum_{j_1=1}^n \cdots \sum_{j_d=1}^n w_1^{j_1} \cdots w_d^{j_d} f(\xi_1^{j_1}, \ldots, \xi_d^{j_d}).
\end{equation}
The convergence rate of this quadrature is bounded by the convergence rates of individual quadratures, and, if $f$ is sufficiently smooth, they all can be much faster (e.g. exponential) than the Monte Carlo convergence rate.
However, direct application of this quadrature suffers from the \emph{curse of dimensionality}: it involves the computation of $n^d$ evaluations of $f(\xi)$, which quickly becomes infeasible already for a moderate $d$.

\subsection{Tensor Train decomposition}
The Tensor Train (TT) decomposition~\cite{osel-tt-2011} alleviates this problem by approximating the \emph{tensor} of coefficients
$$
F_{j_1,\ldots,j_d} = f(\xi_1^{j_1}, \ldots, \xi_d^{j_d}), \quad j_k=1,\ldots,n, \quad k=1,\ldots,d,
$$
by the following expansion:
\begin{equation}\label{eq:tt0}
F_{j_1,\ldots,j_d} \approx \tilde F_{j_1,\ldots,j_d} := \sum_{s_0,\ldots,s_d=1}^{r_0,\ldots,r_d} F^{(1)}_{s_0,j_1,s_1} F^{(2)}_{s_1,j_2,s_2} \cdots F^{(d)}_{s_{d-1},j_d,s_d},
\end{equation}
where the factors $F^{(k)} \in \mathbb{R}^{r_{k-1} \times n \times r_k}$ are called \emph{TT cores}, and the ranges $r_k$ of the new summation indices are called \emph{TT ranks}, $r_k(\tilde F)$ if we want to emphasize that these are TT ranks of the TT decomposition $\tilde F$.
Before delving into the computation of the TT decomposition in Section~\ref{sec:cross}, let us first consider the storage complexity of the TT decomposition itself, and the computational complexity of the expectation in this format.
For convenience, we introduce the maximal TT rank $r:=\max_{k=0,\ldots,d} r_k$,
and, unless we address individual TT ranks, we will omit the word ``maximal'', and refer to $r$ as the \emph{TT rank} $r(\tilde F)$ of a TT decomposition $\tilde F$.
Note that the TT cores contain $\mathcal{O}(dnr^2)$ elements, which can be much fewer than $n^d$ elements in $F$, if the TT rank $r$ is small.
For example, the probability density function of independent random variables sampled on a Cartesian grid factorizes into a rank-1 TT decomposition:
if the random variables are independent, $\rho(\xi) = \rho_1(\xi_1) \cdots \rho_d(\xi_d)$, and, discretizing each marginal density, we obtain the TT core of coefficients $F^{(k)} \in \mathbb{R}^{1 \times n \times 1},$ $F^{(k)}_j = \rho_k(\xi_k^j).$
When approximating standard scalar functions,
we can always let $r_0=r_d=1$.
Other TT ranks can be nontrivial.
Nonetheless, there exist broad classes of smooth or weakly correlated functions that yield low TT ranks, such as polylogarithmic in the approximation error~\cite{rdgs-tt-gauss-2022,uschmajew-approx-rate-2013,khor-low-rank-kron-P1-2006}.

Our main usage of the structure~\eqref{eq:tt0} is the fast computation of the quadrature~\eqref{eq:tpq}.
Indeed, we can start with summing up just the last TT core,
$$
E^{(d)}_{s_{d-1},s_d} = \sum_{j=1}^n w_d^j F^{(d)}_{s_{d-1},j,s_d}, \qquad s_{d-1}=1,\ldots,r_{d-1}, \quad s_d=1,\ldots,r_d.
$$
Now assume we have a matrix $E^{(k+1)} \in \mathbb{R}^{r_k \times r_d}$.
To implement the sums over both $i_k$ and $s_{k}$ we sum the $k$th TT core weighted with $w_{k}$, and multiply the resulting matrix with $E^{(k+1)}$,
$$
E^{(k)}_{s_{k-1},s_d} = \sum_{s_{k}=1}^{r_{k}} \left(\sum_{j=1}^n w_k^j F^{(k)}_{s_{k-1},j,s_k}\right) E^{(k+1)}_{s_k,s_d}, \qquad s_{k-1}=1,\ldots,r_{k-1}, \quad s_d=1,\ldots,r_d.
$$
Continuing recursively (and recalling that $r_0=r_d=1$), we obtain the approximate expectation
$$
\mathbb{\tilde E}\left[f\right] := \sum_{j_1=1}^n \cdots \sum_{j_d=1}^n w_1^{j_1} \cdots w_d^{j_d} \tilde F_{j_1,\ldots,j_d} = E^{(1)}.
$$
Counting the summation sizes, we see that the complexity of the expectation computed this way is $\mathcal{O}(dnr^2)$ similarly to the number of unknowns in a TT decomposition.

However, we need to approximate \emph{vector} functions of spatial discretization coefficients such as $\mathbf{u}(\xi)$, or directly the tensor of fully discrete expansion coefficients~\eqref{eq:discr-vec},
where $j$ is replaced by $j_1,\ldots,j_d$ of the Cartesian quadrature,
$
u_{i,j} = U_{i,j_1,\ldots,j_d}.
$
Since we want to keep the spatial discretization and solver black box,
we employ the so-called \emph{block TT} decomposition~\cite{dkos-eigb-2014},
where instead of summing over $s_0$ we let it enumerate the index $i$ of spatial degrees of freedom,
\begin{equation}\label{eq:btt}
U_{i,j_1,\ldots,j_d} \approx \tilde U_{i,j_1,\ldots,j_d} := \sum_{s_1,\ldots,s_d=1}^{r_1,\ldots,r_d} \hat U^{(1)}_{i,j_1,s_1} U^{(2)}_{s_1,j_2,s_2} \cdots U^{(d)}_{s_{d-1},j_d,s_d}.
\end{equation}
Note that we denote the first TT core $\hat U^{(1)} \in \mathbb{R}^{\hat N \times n \times r_1}$ to emphasize that it contains the extra index $i$.

In addition to computing expectations, the TT decomposition admits fast interpolation.
Interpolating basis functions in space are assumed to be known from the finite element discretization.
In $\xi$, we introduce Lagrange interpolation polynomials $\phi^k_{j_k}(\xi_k)$ centered at the quadrature nodes $\xi_k^{j_k}$.
Now, we just need to sum the basis functions interpolated at each given $x$ and $\xi$ with only one TT core at a time,
$$
\hat u^{(1)}_{s_1}(x,\xi_1) := \sum_{j_1=1}^n \sum_{i=1}^{\hat N} \psi_i(x) \hat U^{(1)}_{i,j_1,s_1} \phi^1_{j_1}(\xi_1), \quad u^{(k)}_{s_{k-1},s_k}(\xi_k):=\sum_{j_k=1}^n U^{(k)}_{s_{k-1},j_k,s_k} \phi^k_{j_k}(\xi_k),
$$
followed by the matrix multiplication
\begin{equation}\label{eq:ftt}
u_h(x,\xi) = \hat u^{(1)}(x,\xi_1) u^{(2)}(\xi_2) \cdots u^{(d)}(\xi_d)
\end{equation}
calculated by a recursion similar to that used to compute expectations with $\mathcal{O}(dnr^2)$ complexity.
The expansion~\eqref{eq:ftt} generalizing the TT structure to continuous variables was called the Functional Tensor Train decomposition~\cite{Marzouk-stt-2016,Gorodetsky-ctt-2019}.
With a slight abuse of notations we can write $r(u_h)$ and $r_k(u_h)$ for the maximal/individual TT ranks of $\tilde U$.
Similarly, we can consider a semi-discrete TT decomposition
$$
\mathbf{u}(\xi) = \mathbf{u}^{(1)}(\xi_1) u^{(2)}(\xi_2) \cdots u^{(d)}(\xi_d),
$$
which is naturally extensible to the solution in Algorithm~\ref{alg:newton},
\begin{equation}\label{eq:btt3}
\begin{bmatrix}\mathbf{y}(\xi) \\ \mathbf{u}(\xi) \\ \boldsymbol\lambda(\xi) \end{bmatrix} = \begin{bmatrix}\mathbf{y}^{(1)}(\xi_1) \\ \mathbf{u}^{(1)}(\xi_1) \\ \boldsymbol\lambda^{(1)}(\xi_1) \end{bmatrix} u^{(2)}(\xi_2) \cdots u^{(d)}(\xi_d).
\end{equation}

\subsection{TT-Cross for the approximate Newton's solution}\label{sec:cross}
The family of TT \emph{Cross} approximation algorithms~\cite{ot-ttcross-2010,so-dmrgi-2011proc,dklm-tt-pce-2015,ds-parcross-2020} computes a TT decomposition from $\mathcal{O}(dnr^2)$ evaluations of the function $f(\xi)$ at adaptively chosen points $\xi$.
The first key ingredient is the \emph{Alternating} iteration,
similar to the methods of Alternating Least Squares~\cite{holtz-ALS-DMRG-2012} and Density Matrix Renormalization Group~\cite{white-dmrg-1993,schollwock-2005}.
Instead of computing the elements of all TT cores simultaneously,
we fix all but one cores, and restrict the relevant problem (such as Least Squares) to the elements of only one TT core at a time.
In the next step, we freeze the TT core we have just computed, and release the next core for the update, and so on.
A motivation for this alternation over the subsets of unknowns comes from the fact that the tensor $\tilde F$ represented by its TT decomposition~\eqref{eq:tt0} is \emph{linear} in the elements of one TT core at a time, and hence for example a convex optimization problem on the elements of the whole tensor $\tilde F$ remains a convex optimization problem on the elements of one TT core $F^{(k)}$.
In contrast, the optimization of all TT cores collectively is usually non-convex even if the initial cost function of $\tilde F$ was quadratic.

To compute TT cores we can use interpolation rather than optimization:
\begin{align}
\tilde F_{j_1,\ldots,j_d} & = F_{j_1,\ldots,j_d} \nonumber\\
\forall (j_1,\ldots,j_d) \in \mathbf{J}_k & :=\{(j_1^t,\ldots,j_d^t) \in \mathbb{R}^d: t=1,\ldots,r_{k-1}n r_k\},
\label{eq:ttinterp}
\end{align}
$\mathbf{J}_k$ is a set of $r_{k-1}n r_k$ tensor indices.
Note that $F^{(k)}$ contains the same number of unknowns.
In fact, \eqref{eq:ttinterp} is a linear equation on the elements of $F^{(k)}$.
Indeed, let
\begin{align}
 F^{<k}_{t,s_{k-1}} & = \sum_{s_0,\ldots,s_{k-2}=1}^{r_0,\ldots,r_{k-2}} F^{(1)}_{s_0,j_1^t,s_1} \cdots F^{(k-1)}_{s_{k-2},j_{k-1}^t,s_{k-1}}, \label{eq:iface-l} \\
 F^{>k}_{t,s_{k}} & = \sum_{s_{k+1},\ldots,s_d=1}^{r_{k+1},\ldots,r_d} F^{(k+1)}_{s_{k},j_{k+1}^t,s_{k+1}} \cdots F^{(d)}_{s_{d-1},j_d^t,s_d}, \label{eq:iface-r}\\
 F^{\neq k}_{t, s_{k-1} j_k s_k} & = F^{<k}_{t,s_{k-1}} \cdot \delta_{j_k,j_k^t} \cdot F^{>k}_{t,s_{k}}, \quad F^{\neq k} \in \mathbb{R}^{r_{k-1}n r_k \times r_{k-1}n r_k}, \label{eq:cross-frame}
\end{align}
where $\delta_{i,j}=1$ when $i=j$ and $0$ otherwise.
One can check that the linearity
\begin{equation}\label{eq:ttlin}
\tilde F_{j_1^t,\ldots,j_d^t} = \sum_{s_{k-1},j_k,s_k} F^{\neq k}_{t, s_{k-1} j_k s_k} F^{(k)}_{s_{k-1},j_k,s_{k}},
\end{equation}
holds, or, stretching $\tilde F$ and $F^{(k)}$ into vectors,
$
\mathrm{vec}(\tilde F(\mathbf{J}_k)) = F^{\neq k} \mathrm{vec}(F^{(k)}).
$

For efficient computation of \eqref{eq:iface-l} and \eqref{eq:iface-r} and index selection in the course of iterations over $k=1,\ldots,d$,
we restrict the structure of $\mathbf{J}_k$ to a Cartesian form
\begin{equation}\label{eq:set-kron}
 \mathbf{J}_{k} = J_{<k} \times [n] \times J_{>k},
\end{equation}
where
$$
[n]=\{1,\ldots,n\}, \quad J_{<k} = \{(j_1^t,\ldots,j_{k-1}^t)\}_{t=1}^{r_{k-1}}, \quad J_{>k} = \{(j_{k+1}^{t},\ldots,j_{d}^t)\}_{t=1}^{r_k}.
$$
Now $F^{<k} \in \mathbb{R}^{r_{k-1}\times r_{k-1}}$ and $F^{>k}\in \mathbb{R}^{r_{k}\times r_{k}}$ can be constructed only from $J_{<k}$ and $J_{>k}$, respectively.
Moreover, we can construct the next sets
\begin{equation}\label{eq:nest}
J_{<k+1} \subset J_{<k} \times [n], \qquad J_{>k-1} \subset [n] \times J_{>k}
\end{equation}
for the next iteration for $F^{(k+1)}$ or $F^{(k-1)}$ by selecting optimal rows from small matrices $\overline{F}^{\le k} \in \mathbb{R}^{r_{k-1} n \times r_k}$ and $\overline{F}^{\ge k} \in \mathbb{R}^{n r_k \times r_{k-1}}$ with elements
\begin{align}\label{eq:cross-ileft}
 \overline{F}^{\le k}_{s_{k-1} j_k, s_k} & = \sum_{t_{k-1}=1}^{r_{k-1}} F^{<k}_{s_{k-1},t_{k-1}} F^{(k)}_{t_{k-1},j_k,s_k}
 \quad \mbox{and} \\
 \overline{F}^{\ge k}_{j_k s_k, s_{k-1}} & = \sum_{t_{k}=1}^{r_k} F^{(k)}_{s_{k-1},j_k,t_k} F^{>k}_{t_k,s_k}.\label{eq:cross-iright}
\end{align}
The set $P \subset \{1,\ldots,r_{k-1}n\}$ of indices of rows extracted from $\overline{F}^{\le k}$, or $P \subset \{1,\ldots,nr_{k}\}$ for $\overline{F}^{\ge k}$, is chosen to reduce the condition number of the corresponding submatrix $F^{<k+1} = \overline{F}^{\le k}_{P,:}$ or $F^{>k-1} = (\overline{F}^{\ge k}_{P,:})^\top$,
and hence the condition number of \eqref{eq:cross-frame}.
We use the Maximum Volume (\emph{maxvol}) algorithm \cite{gostz-maxvol-2010} for this.
Similarly to the submatrices $F^{<k+1}$ and $F^{>k-1}$, the next sets of tensor indices are constructed by taking $p$th tuples from \eqref{eq:nest} for $p \in P$,
\begin{equation}\label{eq:set-sel}
J_{<k+1} = (J_{<k} \times [n])_P, \qquad J_{>k-1} = ([n] \times J_{>k})_P.
\end{equation}

The basic TT-Cross algorithm described above generalizes elegantly \cite{dklm-tt-pce-2015} to the block TT decomposition \eqref{eq:btt} and the Newton's iterate in Line~\ref{alg:newton:solve} of Algorithm~\ref{alg:newton}.
The block TT decomposition can contain the spatial index $i$ in any TT core, for example, the $k$-th core, denoted $\hat U^{(k)} \in \mathbb{R}^{r_{k-1} \times \hat N \times n \times r_k}$ in this case:
\begin{align}\label{eq:bttk}
\tilde U_{i,j_1,\ldots,j_d}  = \sum_{s_0,\ldots,s_d=1}^{r_0,\ldots,r_d} & U^{(1)}_{s_0,j_1,s_1} \cdots U^{(k-1)}_{s_{k-2},j_{k-1},s_{k-1}} \cdot \hat U^{(k)}_{s_{k-1},i,j_k,s_k} \\
& \cdot U^{(k+1)}_{s_{k},j_{k+1},s_{k+1}} \cdots U^{(d)}_{s_{d-1},j_d,s_d}. \nonumber
\end{align}
Note that all TT cores but $\hat U^{(k)}$ have the same shapes as generic TT cores in \eqref{eq:tt0},
and hence we can perform the majority of the TT-Cross computations \eqref{eq:iface-l}--\eqref{eq:cross-frame}, \eqref{eq:set-kron} and \eqref{eq:set-sel} for $U^{(k)}$ instead of $F^{(k)}$ verbatim.
The linear equation \eqref{eq:ttlin} looks different but consistent:
\begin{equation}
U_{i,j_1^t,\ldots,j_d^t} = \sum_{s_{k-1},j_k,s_k} U^{\neq k}_{t, s_{k-1} j_k s_k} \hat U^{(k)}_{s_{k-1},i,j_k,s_{k}},
\end{equation}
and solving it for all values of $i$ gives all the elements of the block TT core $\hat U^{(k)}$.
Now we can use the Principal Component Analysis to reduce $\hat U^{(k)}$ to a 3-dimensional TT core.
Reshaping $\hat U^{(k)}$ into a $r_{k-1}n \times r_k \hat N$ matrix $\mathbf{\hat U}^{(k)}$, we can compute its Singular Value Decomposition $\mathbf{\hat U}^{(k)} = \mathbf{U} \mathbf{S} \mathbf{V}^\top$, and remove the latter singular values in $\mathbf{S}$ and corresponding singular vectors in $\mathbf{U}$ and $\mathbf{V}$ to keep only $\tilde r_k \le \min(r_{k-1}n, r_k \hat N)$ principal components $\mathbf{U}_{\tilde r_k},\mathbf{S}_{\tilde r_k}$ and $\mathbf{V}_{\tilde r_k}$, such that $\|\mathbf{\hat U}^{(k)} - \mathbf{U}_{\tilde r_k},\mathbf{S}_{\tilde r_k}\mathbf{V}_{\tilde r_k}^\top\|_F \le \delta \|\mathbf{\hat U}^{(k)} \|_F$ for some desired error threshold $\delta>0$.
Now, $\mathbf{U}_{\tilde r_k}$ can be reshaped back to a three-dimensional tensor $U^{(k)} \in \mathbb{R}^{r_{k-1} \times n \times \tilde r_k}$, which becomes the new TT core,
and the new indices can be selected as shown in \eqref{eq:cross-ileft} -- \eqref{eq:set-sel}.
If the current TT-Cross iteration increases $k\rightarrow k+1$, the right singular vectors can be discarded, since the next TT core will be replaced by $\hat U^{(k+1)}$ in the next step.
Vice versa, in the backward iteration step $k\rightarrow k-1$, the leading right singular vectors are reshaped to form the new TT core.
This mechanism allows us to replace the TT ranks $r_k$ by the new TT ranks $\tilde r_k$ adaptively in the course of the iteration,
reducing the number of parameters of the algorithm to just one
error threshold $\delta.$
Finally, the iteration is stopped also when the relative Frobenius norm of the difference between $\hat U^{(k)}$ at the current and the previous iteration is below $\delta$ for all $k$.

\section{Numerical examples}\label{sec:num}
The numerical tests were implemented in Matlab based on the TT-Toolbox~\cite{tt-toolbox}
and run on a Intel Xeon E5-2640 v4 CPU.

\subsection{Elliptic PDE}\label{sec:elliptic}
In the first test we consider the following benchmark example, an adaptation of that in \cite{Surowiec-stability-2021,Surowiec-risk-2022}.
We solve \eqref{eq:obj_intro}, where:
\begin{align}\label{eq:elliptic}
J(y,\xi) & := \frac{1}{2} \|y(x,\xi) - y_d(x)\|_{L^2(D)}^2, \\
S(u) & := \frac{\alpha}{2}\|u(x,\xi)\|_{L^2(D)}^2, \\
\text{s.t.} \quad \nu(\xi) \Delta y(x,\xi) & = g(x,\xi) + u(x,\xi), \quad (x,\xi) \in D \otimes [-1,1]^d, \\
\nu(\xi) & = 10^{\xi_1(\omega)-2}, \quad g(x,\xi) = \frac{\xi_2(\omega)}{100},
\end{align}
and boundary conditions depending on two options for the physical domain $D$:
\begin{itemize}
 \item $D=(0,1)$, $d=4$, and
\begin{align*}
 y|_{x=0} & = -1-\frac{\xi_3(\omega)}{1000}, &
 y|_{x=1} & = -\frac{2+\xi_4(\omega)}{1000}
\end{align*}
\item $D=(0,1)^2$, $d=6$, and
\begin{align*}
 y|_{x_1=0} & = b_1(\xi) (1-x_2) + b_2(\xi) x_2, &
 y|_{x_2=1} & = b_2(\xi) (1-x_1) + b_3(\xi) x_1 \\
 y|_{x_1=1} & = b_4(\xi) (1-x_2) + b_3(\xi) x_2, &
 y|_{x_2=0} & = b_1(\xi) (1-x_1) + b_4(\xi) x_1, \\
 b_1(\xi) & = -1-\frac{\xi_3(\omega)}{1000}, &
 b_2(\xi) & = -\frac{2+\xi_4(\omega)}{1000}, \\
 b_3(\xi) & = -1-\frac{\xi_5(\omega)}{1000}, &
 b_4(\xi) & = -\frac{2+\xi_6(\omega)}{1000},
\end{align*}
\end{itemize}
the random vector $\xi(\omega) = (\xi_1(\omega), \ldots, \xi_d(\omega)) \sim \mathcal{U}(-1,1)^d$ is uniformly distributed (so $\rho(\xi)=2^{-d}$), and
the desired state $y_d(x) = -\sin(50 x/\pi)$.

We start with the one-dimensional physical domain, $D=(0,1)$.
The physical variable $x$ is discretized with a uniform grid with step size $h=1/1024$,
and each of the random variables $\xi_k$ is discretized with $n_{\xi}=17$ Gauss-Legendre grid points.
The first regularization parameter $\alpha=10^{-2}$ is fixed to the value in the original benchmark.
However, the shared sparsity regularization parameter $\beta$ is varied to study its effect on the control.
Firstly, we fix $\varepsilon=10^{-5}$ and the TT approximation threshold $\delta=10^{-5}$.
The control for different $\beta$ is shown in Figure~\ref{fig:ell-beta}.
As we expected, larger $\beta$ increase the fraction of the domain where both the mean and the variance of the solution are (nearly) zero, i.e. the sparsity is shared.
The measure of the set where the absolute mean control is below $10^{-4}$,
the number of iterations of Algorithm~\ref{alg:newton} required to drive the relative increment between the consecutive iterations of the solution below $10^{-5}$,
and the misfit of the state
are shown in Table~\ref{tab:beta}.
In addition, we show total CPU times collected in MATLAB R2021b on an Intel Xeon E5-2640 v4 CPU at 2.40 GHz.
\begin{table}[h!]
\centering
\caption{Elliptic example (1D), measure of sparsity, number of Newton's iterations to stopping tolerance $10^{-5}$, state misfit, and total CPU times for different $\beta.$}
\label{tab:beta}
\begin{tabular}{ccccc}
$\beta$   & $\int_0^1 \mathbf{1}[|\mathbb{E}[u_h]|<10^{-4}]dx$ & \#iterations & $\mathbb{E}\left[\|y_h - y_d\|_{L^2(D)}^2\right]$  & CPU time (s.) \\ \hline
$0$       & 0.000          & 2              &  0.0645   &  10.4  \\
$10^{-2}$ & 0.108          & 75             &  0.0798   &  256  \\
$10^{-1}$ & 0.575          & 341            &  0.1763   &  1176  \\
$1$       & 0.890          & 1370           &  0.4246   &  3351  \\
\end{tabular}
\end{table}
Clearly, larger $\beta$ make the Hessian approximation less accurate, and require more iterations of the (inexact) Newton's method.
Unsurprisingly, $\beta=0$ corresponding to the linear problem requires only one essential Newton's iteration (the second iteration is just checking the convergence).
The TT ranks during the TT-Cross algorithm remain between $4$ and $6$ for all iterations and $\beta$ in the considered range.
The TT ranks also stay in the same range as the TT truncation tolerance is varied between $10^{-3}$ and $10^{-8}$.
It is possible to rigorously establish these numerical observations.
\begin{proposition}
The solution
$
\left[
\mathbf{y}^\top,
\mathbf{u}^\top,
\boldsymbol\lambda^\top
\right]^\top
$
of this 1D elliptic PDE in the final iteration of the TT-Cross admits an exact TT decomposition of TT ranks not greater than $7$.
\end{proposition}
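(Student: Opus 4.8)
The plan is to exploit the fact that, at the final TT-Cross iterate, Algorithm~\ref{alg:newton} has converged, so the frozen penalty scaling renders the optimality system \emph{linear} in $(\mathbf{y},\mathbf{u},\boldsymbol\lambda)$ and separable in $\xi$. First I would observe that at the fixed point the quantity $\sqrt{\sum_{j''} w_{j''} u_{i,j''}^2 + \varepsilon^2}$ appearing in \eqref{eq:RepsGrad}--\eqref{eq:RepsHess} depends only on the spatial index $i$, so that $\tilde\nabla^2_{\mathbf{u}\mathbf{u}}\mathcal{R}_{\varepsilon,h}$ reduces to a diagonal matrix $\mathrm{diag}(D_i)$ acting as the identity in $\xi$. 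Consequently the block system \eqref{eq:HessApprox} solved in Step~\ref{alg:newton:solve} becomes, for each quadrature node $\xi=(\xi_1,\dots,\xi_4)$, a local $3\hat N\times 3\hat N$ linear system
$$
\mathbf{M}(\xi_1)\begin{bmatrix}\mathbf{y}\\\mathbf{u}\\\boldsymbol\lambda\end{bmatrix} = \mathbf{b}(\xi),
$$
whose matrix depends on $\xi$ only through the scalar $\nu(\xi_1)=10^{\xi_1-2}$ multiplying the fixed Laplacian stiffness matrix in both $\boldsymbol A$ and $\boldsymbol A^\top$.

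The second step is to record the $\xi$-dependence of the right-hand side. Since the source $g=\xi_2/100$, the left Dirichlet datum $\propto\xi_3$, and the right Dirichlet datum $\propto\xi_4$ each enter the discrete load vector affinely and through disjoint variables, I would write $\mathbf{b}(\xi)=\mathbf{b}_0+\mathbf{b}_2\xi_2+\mathbf{b}_3\xi_3+\mathbf{b}_4\xi_4$ with fixed vectors $\mathbf{b}_0,\mathbf{b}_2,\mathbf{b}_3,\mathbf{b}_4\in\mathbb{R}^{3\hat N}$, after lifting the inhomogeneous boundary data into $\mathbf{b}_0,\mathbf{b}_3,\mathbf{b}_4$. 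Inverting the local system gives the closed form
$$
\begin{bmatrix}\mathbf{y}\\\mathbf{u}\\\boldsymbol\lambda\end{bmatrix}(\xi) = \sum_{k\in\{0,2,3,4\}} \big(\mathbf{M}(\xi_1)^{-1}\mathbf{b}_k\big)\,\mu_k(\xi_2,\xi_3,\xi_4),
$$
where $\mu_0\equiv1$, $\mu_2=\xi_2$, $\mu_3=\xi_3$, $\mu_4=\xi_4$. This exhibits the solution as a sum of four separable terms whose left factors $\mathbf{M}(\xi_1)^{-1}\mathbf{b}_k$ depend on $\xi$ only through $\xi_1$ (hence live entirely in the first block-TT core of \eqref{eq:btt3}) and whose right factors are the monomials $\mu_k$.

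The third step is to read off the TT ranks from this representation, using that the Gauss--Legendre quadrature with $n_\xi=17\ge2$ nodes represents the degree-one monomials $\mu_k$ exactly on the grid. Cutting the index set between $\xi_1$ and $\{\xi_2,\xi_3,\xi_4\}$, between $\{\xi_1,\xi_2\}$ and $\{\xi_3,\xi_4\}$, and between $\{\xi_1,\xi_2,\xi_3\}$ and $\{\xi_4\}$, the number of distinct monomials straddling each cut is $4$, $3$, and $2$, respectively, so the corresponding unfolding ranks are bounded by these numbers; crucially the three blocks $\mathbf{y},\mathbf{u},\boldsymbol\lambda$ share the same monomial structure and hence the same cores $u^{(2)},\dots,u^{(d)}$, so stacking them in the first core of \eqref{eq:btt3} does not inflate the ranks. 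This yields a maximal TT rank comfortably within the claimed bound of $7$; even a cruder count that does not exploit the sharing across blocks, or that retains a few redundant terms from the lift, stays below $7$.

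The main obstacle, and the step deserving the most care, is justifying the reduction to a linear, affinely-parametrized system at the \emph{exact} fixed point: one must argue that the frozen diagonal $D_i$ is genuinely $\xi$-independent and that the TT-Cross truncation at threshold $\delta$ introduces no spurious $\xi$-coupling, and one must handle the Dirichlet lift so that the boundary data contribute only the single monomials $\xi_3,\xi_4$ rather than products. A secondary subtlety is that placing the operator parameter $\xi_1$ in the first core is precisely what confines the non-polynomial dependence $\nu(\xi_1)=10^{\xi_1-2}$, together with the non-separable dependence of $\mathbf{M}(\xi_1)^{-1}$ on it, to a single core so that it never crosses an interface; were $\xi_1$ ordered differently this dependence could in principle raise the ranks toward $\min(n_\xi,\cdot)$.
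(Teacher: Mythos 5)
Your proposal is correct, and it rests on the same structural observations as the paper's proof: the KKT operator of the final Newton step depends on $\xi$ only through $\xi_1$ (the mass matrices, the actuator, and the frozen diagonal of the approximate shared-sparsity Hessian are all constant in $\xi$, while $\mathbf{A}(\xi)=\nu(\xi_1)\Delta_h$), and the right-hand side splits into four terms each involving a single random variable. Where you differ is in the bookkeeping: you invert the full $3\hat N\times 3\hat N$ block system in one go and bound each unfolding rank by counting the monomials $\{1,\xi_2,\xi_3,\xi_4\}$ straddling each cut, whereas the paper eliminates $\mathbf{y}$ and $\mathbf{u}$, solves a Schur-complement equation for $\boldsymbol\lambda$ (rank $\le 4$), and then reassembles $\mathbf{y}=\mathbf{y}_u+\mathbf{y}_g+\mathbf{y}_3+\mathbf{y}_4$ by adding three rank-one homogeneous pieces, arriving at $4+3=7$. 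Your count is actually sharper --- it gives rank $\le 4$ --- because the $\xi_{2:4}$-factors of those three added pieces already lie in the span of the factors present in $\mathbf{y}_u$, a cancellation the paper's additive estimate does not exploit; since $4\le 7$ this of course proves the stated proposition. Two minor remarks. First, the $\xi$-independence of the denominator $\bigl(\sum_{j''}w_{j''}u_{i,j''}^2+\varepsilon^2\bigr)^{1/2}$ is not a fixed-point property: it holds at \emph{every} iteration by construction, since the sum over $j''$ eliminates the dependence on $j$; all you need is that the frozen diagonal entering the linear solve is constant in $\xi$, which it is. Second, your worry about the Dirichlet lift is harmless even if the lift multiplies the boundary data by $\mathbf{A}(\xi_1)$: the resulting term is still a (vector function of $\xi_1$) times (scalar affine function of $\xi_3$ or $\xi_4$), so each contribution remains separable and the cut-counting is unchanged; likewise SVD truncation in TT-Cross can only decrease ranks, so no ``spurious coupling'' can arise.
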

\begin{proof}
The proof is given in Appendix~\ref{sec:1drank}.
\end{proof}

Note that the elementwise square $u_{i,j}^2$ within $\mathcal{R}_{\varepsilon,h}(\mathbf{u})$ can be computed exactly in the TT format, followed by taking the square root of deterministic coefficients only.
In contrast, computing the exact $L^1$ norm would require to approximate the elementwise modulus $\mathbf{u}_+ = \left[|u_{i,j}|\right]$ in the TT format first, followed by the quadrature.
However, since the modulus is non-smooth, the TT ranks of $\mathbf{u}_+$ reach $59$ in this experiment, in contrast to at most $6$ for $\mathbf{u}$.

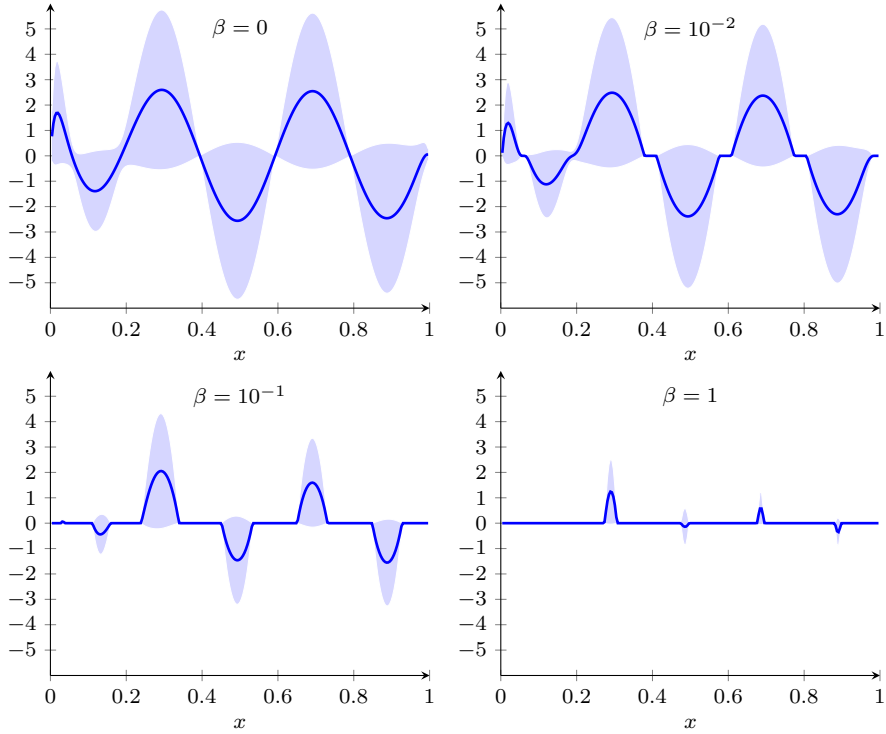
\begin{figure}[h!]
\noindent
\begin{tikzpicture}
 \begin{axis}[%
 width=0.42\linewidth,
 height=0.34\linewidth,
%  scale only axis,
 xlabel=$x$,
 xmin=0,xmax=1,
 ymin=-6,ymax=6,
 ytick={-5,-4,-3,-2,-1,0,1,2,3,4,5},
 ]
 \addplot+[no marks,color=white,name path=minus] table[header=false,x index=0,y index=1]{elliptic1-umeanmstd-beta0-eps5.dat};
 \addplot+[no marks,color=white,name path=plus] table[header=false,x index=0,y index=1]{elliptic1-umeanpstd-beta0-eps5.dat};

\addplot[blue!16!white] fill between[of = minus and plus];

 \addplot+[blue,no marks,line width=1pt] table[header=false,x index=0,y index=1]{elliptic1-umean-beta0-eps5.dat};

 \node at (axis cs: 0.5,5) {$\beta=0$};
\end{axis}
\end{tikzpicture}
\begin{tikzpicture}
 \begin{axis}[%
 width=0.42\linewidth,
 height=0.34\linewidth,
%  scale only axis,
 xlabel=$x$,
 xmin=0,xmax=1,
 ymin=-6,ymax=6,
 ytick={-5,-4,-3,-2,-1,0,1,2,3,4,5},
 ]
 \addplot+[no marks,color=white,name path=minus] table[header=false,x index=0,y index=1]{elliptic1-umeanmstd-beta-2-eps5.dat};
 \addplot+[no marks,color=white,name path=plus] table[header=false,x index=0,y index=1]{elliptic1-umeanpstd-beta-2-eps5.dat};

\addplot[blue!16!white] fill between[of = minus and plus];

 \addplot+[blue,no marks,line width=1pt] table[header=false,x index=0,y index=1]{elliptic1-umean-beta-2-eps5.dat};

 \node at (axis cs: 0.5,5) {$\beta=10^{-2}$};
\end{axis}
\end{tikzpicture}\\
\noindent
\begin{tikzpicture}
 \begin{axis}[%
 width=0.42\linewidth,
 height=0.34\linewidth,
%  scale only axis,
 xlabel=$x$,
 xmin=0,xmax=1,
 ymin=-6,ymax=6,
 ytick={-5,-4,-3,-2,-1,0,1,2,3,4,5},
 ]
 \addplot+[no marks,color=white,name path=minus] table[header=false,x index=0,y index=1]{elliptic1-umeanmstd-beta-1-eps5.dat};
 \addplot+[no marks,color=white,name path=plus] table[header=false,x index=0,y index=1]{elliptic1-umeanpstd-beta-1-eps5.dat};

\addplot[blue!16!white] fill between[of = minus and plus];

 \addplot+[blue,no marks,line width=1pt] table[header=false,x index=0,y index=1]{elliptic1-umean-beta-1-eps5.dat};

 \node at (axis cs: 0.5,5) {$\beta=10^{-1}$};
\end{axis}
\end{tikzpicture}
\begin{tikzpicture}
 \begin{axis}[%
 width=0.42\linewidth,
 height=0.34\linewidth,
%  scale only axis,
 xlabel=$x$,
 xmin=0,xmax=1,
 ymin=-6,ymax=6,
 ytick={-5,-4,-3,-2,-1,0,1,2,3,4,5},
 ]
 \addplot+[no marks,color=white,name path=minus] table[header=false,x index=0,y index=1]{elliptic1-umeanmstd-beta1-eps5.dat};
 \addplot+[no marks,color=white,name path=plus] table[header=false,x index=0,y index=1]{elliptic1-umeanpstd-beta1-eps5.dat};

\addplot[blue!16!white] fill between[of = minus and plus];

 \addplot+[blue,no marks,line width=1pt] table[header=false,x index=0,y index=1]{elliptic1-umean-beta1-eps5.dat};

 \node at (axis cs: 0.5,5) {$\beta=1$};
\end{axis}
\end{tikzpicture}
\caption{Elliptic PDE (1D), mean (solid lines) $\pm$ one standard deviation (shaded areas) of the optimal control $u_h(x,\xi)$ for different $\beta$ and $\varepsilon=10^{-5}.$}
\label{fig:ell-beta}
\end{figure}

Now we vary the smoothness parameter $\varepsilon$.
We fix $\beta=0.1$, $n_{\xi}=33$ (to make the parameter discretization error negligible), the TT approximation tolerance of $10^{-8}$, and compute the optimal control $u_{\varepsilon}$ for $\varepsilon$ in the range between $10^{-6}$ and $10^{-1}$.
In Figure~\ref{fig:ell-eps} (left), we show the
difference between the full control, its mean and variance computed at the given $\varepsilon$ compared to those computed at $\varepsilon=10^{-6}$,
as well as a similar difference in the total cost~\eqref{eq:obj_intro}.
We see that the convergence is linear in $\varepsilon$ for all outputs of the solution.
The effect of a high $\varepsilon$ is shown in Figure~\ref{fig:ell-eps} (right).
Essentially, $\varepsilon$ guides the threshold below which the solution should be seen as zero.

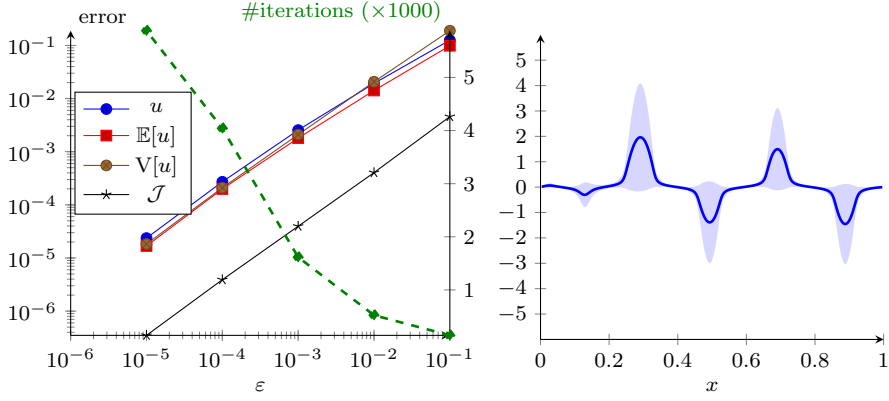
\begin{figure}[h!]
\noindent
\begin{tikzpicture}
 \begin{axis}[%
 width=0.42\linewidth,
 height=0.34\linewidth,
 axis y line=left,
%  scale only axis,
 xlabel=$\varepsilon$,
 xmode=log,
 xmin=1e-6,
 ymode=log,
 ylabel=error,
 legend style={at={(0.01,0.80)},anchor=north west},
 ]
\addplot+ coordinates{
(1e-1, 1.2563e-01)
(1e-2, 1.9279e-02)
(1e-3, 2.5540e-03)
(1e-4, 2.7160e-04)
(1e-5, 2.3697e-05)
}; \addlegendentry{$u$};
\addplot+ coordinates{
(1e-1, 9.8590e-02)
(1e-2, 1.4173e-02)
(1e-3, 1.8040e-03)
(1e-4, 1.9890e-04)
(1e-5, 1.6840e-05)
}; \addlegendentry{$\mathbb{E}[u]$};
\addplot+ coordinates{
(1e-1, 1.8903e-01)
(1e-2, 2.0716e-02)
(1e-3, 2.0995e-03)
(1e-4, 2.0988e-04)
(1e-5, 1.8346e-05)
}; \addlegendentry{$\mathrm{V}[u]$};
% \addplot+[nodes near coords,point meta = explicit symbolic, every node near coord/.style={anchor=south east,inner sep=2pt}] coordinates{
\addplot+ coordinates{
(1e-1, 0.141618672 - 0.137048702) % [145 ]
(1e-2, 0.137452900 - 0.137048702) % [524 ]
(1e-3, 0.137088277 - 0.137048702) % [1616]
(1e-4, 0.137052615 - 0.137048702) % [4042]
(1e-5, 0.137049058 - 0.137048702) % [5879]
}; \addlegendentry{$\mathcal{J}$};
\end{axis}
 \begin{axis}[%
 width=0.42\linewidth,
 height=0.34\linewidth,
 axis y line=right,
%  scale only axis,
 hide x axis,
 xmode=log,
 xmin=1e-6,
 ytick={1000,2000,3000,4000,5000},
 yticklabels={1,2,3,4,5},
 y label style={at={(1.0,1.0)},anchor=south east,rotate=-90},
 ylabel=\textcolor{green!50!black}{\#iterations ($\times 1000$)},
 ymode=normal,
 legend style={at={(0.01,0.99)},anchor=north west},
 ]
\addplot+[green!50!black,dashed,line width=1pt,mark=diamond*,mark options={fill=green!50!black}] coordinates{
(1e-1, 145 )
(1e-2, 524 )
(1e-3, 1616)
(1e-4, 4042)
(1e-5, 5879)
};
\end{axis}
\end{tikzpicture}
\begin{tikzpicture}
 \begin{axis}[%
 width=0.38\linewidth,
 height=0.34\linewidth,
%  scale only axis,
 xlabel=$x$,
 xmin=0,xmax=1,
 ymin=-6,ymax=6,
 ytick={-5,-4,-3,-2,-1,0,1,2,3,4,5},
 ]
 \addplot+[no marks,color=white,name path=minus] table[header=false,x index=0,y index=1]{elliptic1-umeanmstd-beta-1-eps1.dat};
 \addplot+[no marks,color=white,name path=plus] table[header=false,x index=0,y index=1]{elliptic1-umeanpstd-beta-1-eps1.dat};

\addplot[blue!16!white] fill between[of = minus and plus];

 \addplot+[blue,no marks,line width=1pt] table[header=false,x index=0,y index=1]{elliptic1-umean-beta-1-eps1.dat};
\end{axis}
\end{tikzpicture}
\caption{Elliptic PDE (1D), $\beta=10^{-1}$. Left: errors in the full control $\|u_{\varepsilon} - u_{10^{-6}}\|_{L^2_{\rho}(\Xi; L^2(D))}$, mean control $\|\mathbb{E}[u_{\varepsilon}] - \mathbb{E}[u_{10^{-6}}]\|_{L^2(D)}$, variance of the control $\|\mathrm{Var}[u_{\varepsilon}] - \mathrm{Var}[u_{10^{-6}}]\|_{L^2(D)}$, and the total cost $\mathcal{J}_{\varepsilon} - \mathcal{J}_{10^{-6}}$, as well as the number of iterations till the stopping tolerance $10^{-8}$. Right: mean (solid line) $\pm$ one standard deviation (shaded area) of the optimal control for $\varepsilon=10^{-1}$.
}
\label{fig:ell-eps}
\end{figure}

\begin{figure}[h!]
\noindent
\begin{tikzpicture}
 \begin{axis}[%
 width=0.42\linewidth,
 height=0.34\linewidth,
%  scale only axis,
 xlabel=\#iterations,
 xmode=normal,
 ymode=log,
 ymin=3e-6,ymax=1e-1,
 legend style={at={(0.99,0.99)},anchor=north east},
 ]
\addplot+ coordinates{
(16 , 8.1080e-02)
(43 , 2.5404e-02)
(124, 6.6489e-03)
(340, 1.7039e-03)
(880, 4.4788e-04)
}; \addlegendentry{$\varepsilon=10^{-5}$}; %  node[pos=0.5,rotate=-35,below] {TT};
\addplot+ coordinates{
(43  , 2.3739e-02)
(120 , 5.8076e-03)
(310 , 1.1083e-03)
(659 , 1.5085e-04)
% (1116, 1.5251e-05)
}; \addlegendentry{$\varepsilon=10^{-3}$}; % node[pos=0.5,rotate=-35,above] {MC};
\addplot+ coordinates{
(41 , 1.8012e-02)
(100, 2.9693e-03)
(191, 3.6071e-04)
(298, 3.7960e-05)
(409, 3.6430e-06)
}; \addlegendentry{$\varepsilon=10^{-2}$}; % node[pos=0.5,rotate=-35,above] {MC};
\end{axis}
\end{tikzpicture}
\begin{tikzpicture}
 \begin{axis}[%
 width=0.42\linewidth,
 height=0.34\linewidth,
%  scale only axis,
 xlabel=$N_{PDE}$,
 xmode=log,
 ymode=log,
 ymin=3e-6,ymax=1e-1,
 legend style={at={(0.01,0.01)},anchor=south west},
 ]

\addplot+ coordinates{
(2^10*191, 1.1118e-02)
(2^12*191, 4.6762e-03)
(2^14*191, 2.0489e-03)
}; \addlegendentry{\footnotesize MC $10^{-5}$}; % node[pos=0.4,rotate=-35,above]

\addplot+ coordinates{
(2^10*100, 1.1483e-02)
(2^12*100, 5.6641e-03)
(2^14*100, 3.7030e-03)
}; \addlegendentry{\footnotesize MC $10^{-4}$}; % node[pos=0.4,rotate=-35,above]

\addplot+ coordinates{
(2^10*41, 2.0903e-02)
(2^12*41, 1.8901e-02)
(2^14*41, 1.8145e-02)
}; \addlegendentry{\footnotesize MC $10^{-3}$}; % node[pos=0.4,rotate=-35,above]

\addplot+[nodes near coords,point meta = explicit symbolic, every node near coord/.style={anchor=north west,inner sep=2pt}] coordinates{
(41 *1917, 1.8012e-02) % [$10^{-3}$]
(100*2012, 2.9693e-03)[$10^{-4}$]
(191*2050, 3.6071e-04)[$10^{-5}$]
(298*2056, 3.7960e-05)[$10^{-6}$]
(409*2062, 3.6430e-06) % [$10^{-7}$]

}; \addlegendentry{\footnotesize TT}; % node[pos=0.4,rotate=-35,below] {TT};

\addplot[domain=2e5:3e6, no marks,dashed] {5*x^(-1/2)}; \addlegendentry{\footnotesize $N_{PDE}^{-1/2}$};
\end{axis}
\end{tikzpicture}
\caption{Elliptic PDE (1D), left: mean control error $\|\mathbb{\tilde E}[u_{\varepsilon}] - \mathbb{E}_*[u_{\varepsilon}]\|_{L^2(D)}$ in the TT method with Newton's iterations for different smoothing parameters $\varepsilon$. Right: $\|\mathbb{\tilde E}[u_{\varepsilon}] - \mathbb{E}_*[u_{\varepsilon}]\|_{L^2(D)}$ in the Monte Carlo and TT methods with respect to the number of deterministic PDE solutions for $\varepsilon=10^{-2}$. Numbers below points in the TT plot and in the legend of MC plots show the stopping tolerance.  The reference solution $\mathbb{E}_*[u_{\varepsilon}]$ is computed with the TT method and tolerance $10^{-8}$.}
\label{fig:ell-mc}
\end{figure}
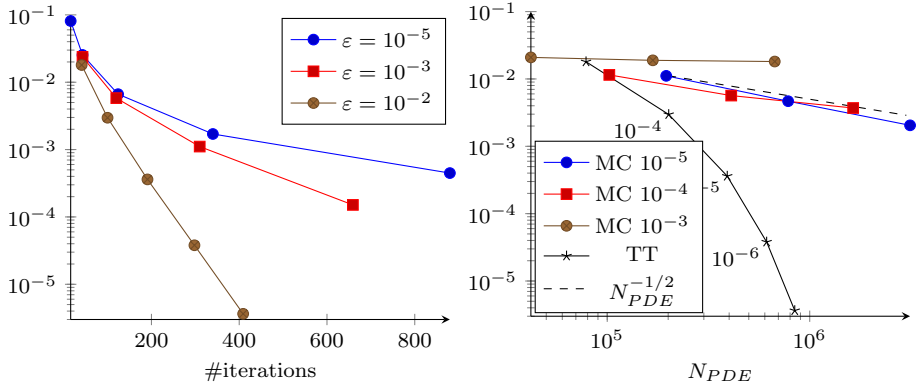

From the left plot of Figure~\ref{fig:ell-eps} we can also notice that the number of iterations grows asymptotically logarithmically with $\varepsilon$.
In Figure~\ref{fig:ell-mc} (left) we study this further.
For each $\varepsilon$, we take the final mean control computed with the TT approximation and stopping tolerance of $10^{-8}$ as a reference solution $\mathbb{E}_*[u_{\varepsilon}]$, and use it to estimate the solution error at previous iterations.
We see that eventually the method recovers the linear convergence, although the pre-asymptotic regime may take quite a few iterations for small $\varepsilon$.

In addition, in Figure~\ref{fig:ell-mc} (right) we compare the TT approach with the Monte Carlo method in terms of the computational complexity, measured in the number of deterministic PDE solves.
The Monte Carlo method uses the same approximate Newton's iteration, but the expectations are approximated by a Monte Carlo quadrature instead of the TT approximation with a Gaussian quadrature.
The TT method adapts the TT ranks (and hence the number of PDE solves needed during the TT-Cross algorithm) towards the stopping tolerance, so we just need to vary the latter.
However, for the Monte Carlo method, the stopping tolerance and the number of Monte Carlo samples are independent parameters, so we vary them both.
We see that the Monte Carlo method exhibits the usual $1/\sqrt{N}$ rate of convergence, much slower than the linear convergence of the TT method.

\begin{figure}[h!]
\noindent
\includegraphics[width=0.32\linewidth]{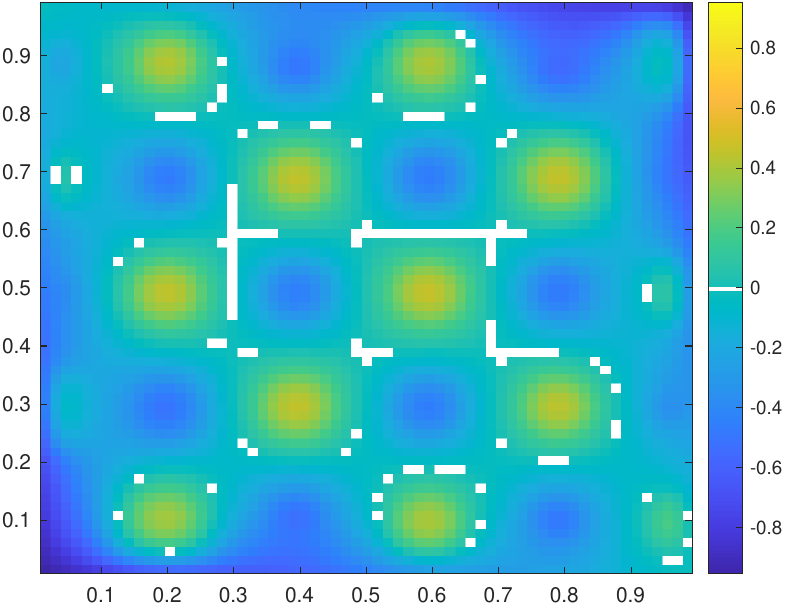}
\includegraphics[width=0.32\linewidth]{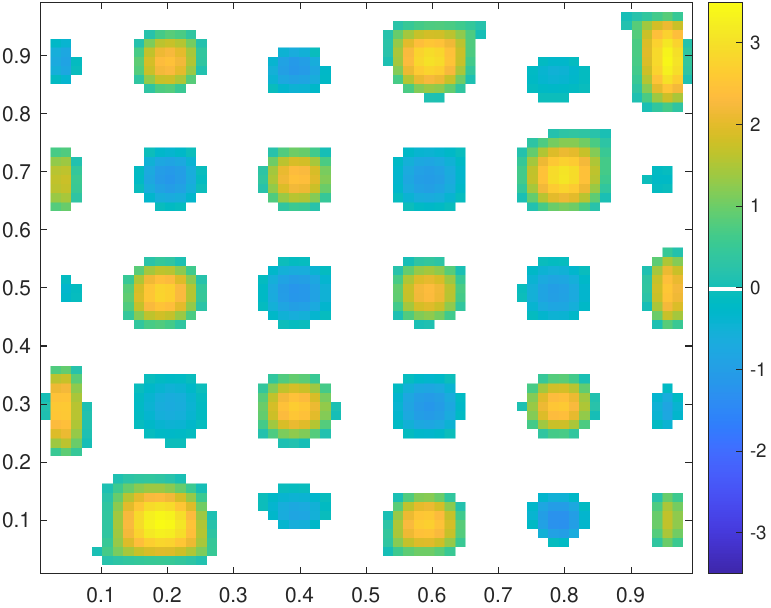}
\includegraphics[width=0.32\linewidth]{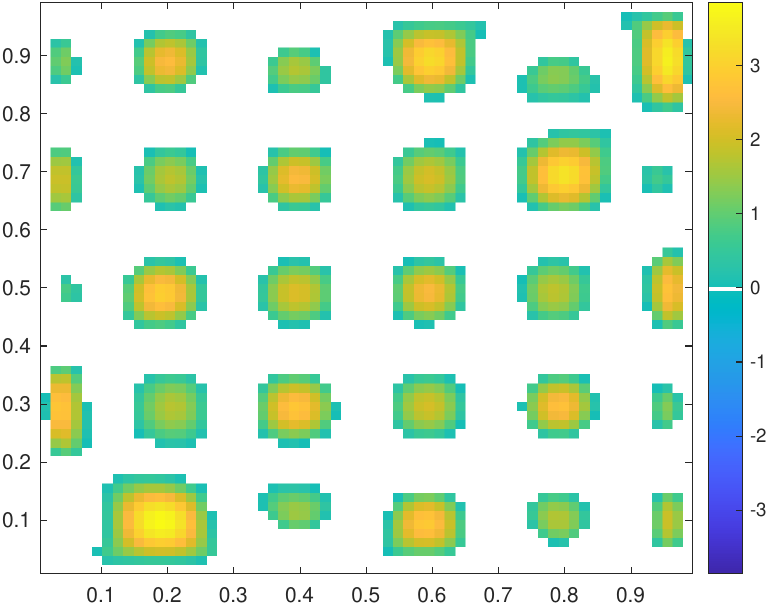} \\
\includegraphics[width=0.32\linewidth]{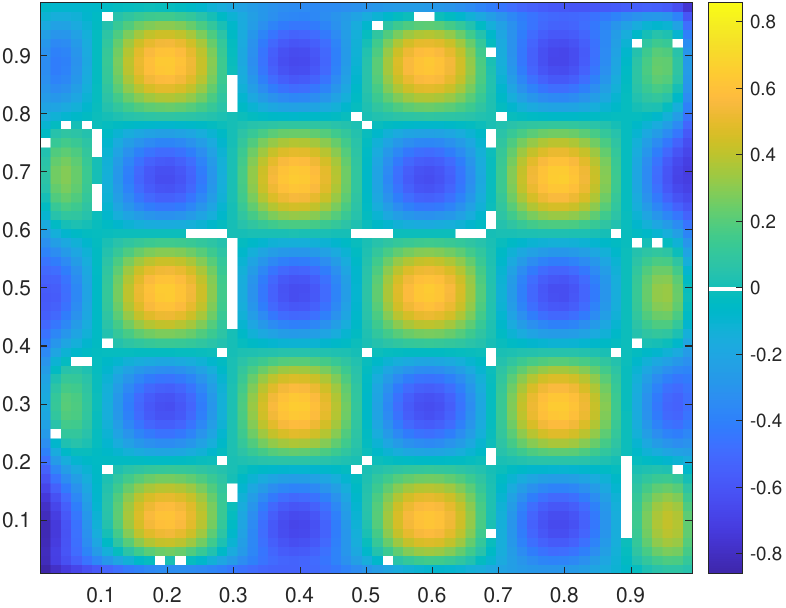}
\includegraphics[width=0.32\linewidth]{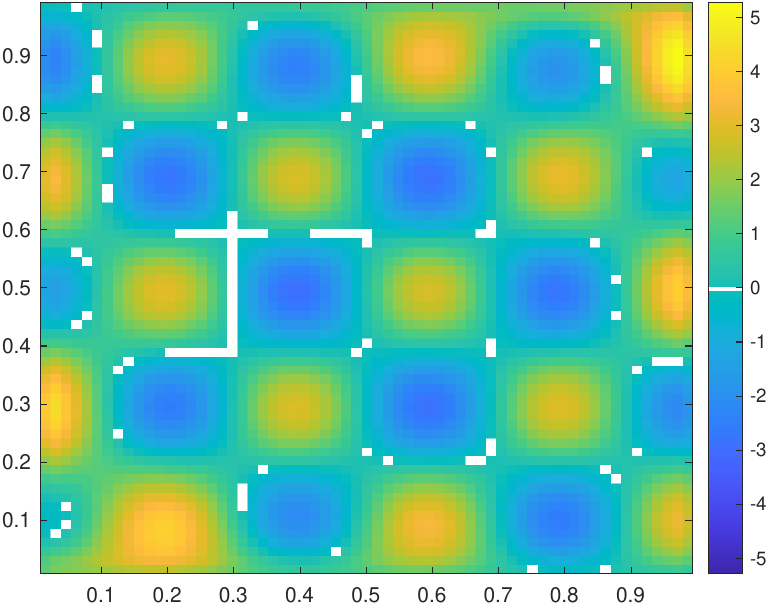}
\includegraphics[width=0.32\linewidth]{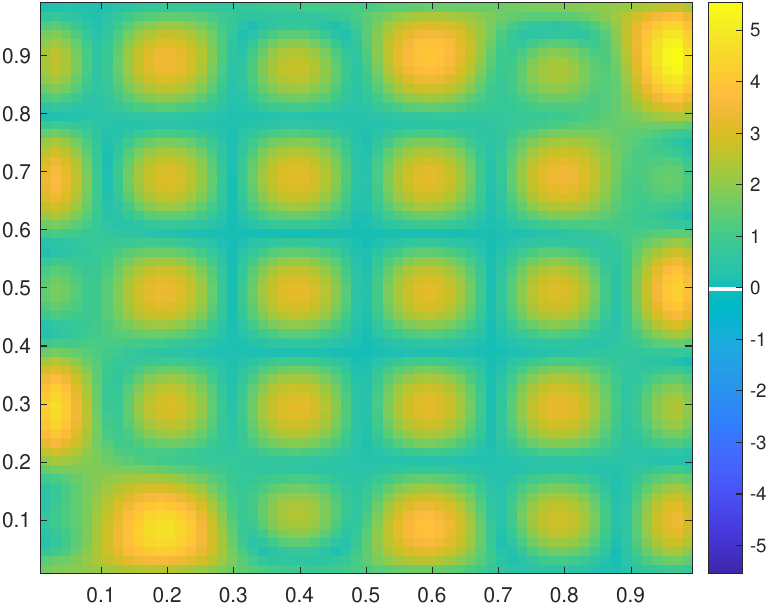}
\caption{Elliptic PDE (2D) with $h=1/64$. Left: $\mathbb{E}[y_h]$. Middle: $\mathbb{E}[u_h]$. Right: $\mathrm{Std}[u_h]$. Top: $\beta=0.1$ and $\varepsilon=10^{-5}$, $\mathbb{E}\left[\|y_h - y_d\|_{L^2(D)}^2\right] = 0.1843,$ $\int \mathbf{1}[|\mathbb{E}[u_h]|<10^{-4}]dx = 0.681.$ Bottom: $\beta=0$, $\mathbb{E}\left[\|y_h - y_d\|_{L^2(D)}^2\right] = 0.0925,$ $\int \mathbf{1}[|\mathbb{E}[u_h]|<10^{-4}]dx = 0.$}
\label{fig:2dsol}
\end{figure}
Figure~\ref{fig:2dsol} shows the results for the case when $D = (0,1)^2$, i.e., a
2-dimensional physical domain. A similar behavior as in the 1-dimensional setting is
observed.

\subsection{Topology optimization}\label{sec:topology}
In this section we consider the topology optimization problem under uncertain Young's modulus.
This problem aims to find an optimal distribution of the material of given relative volume $\bar V$ over the given domain $D$, which
minimizes the compliance of the structure.
The material distribution is encoded by the function of volume fraction $u(x,\xi): D \times \Xi  \rightarrow [0,1]$,
where $0$ corresponds to the absence of the material, and $1$ corresponds to the presence of the material (fractional values aid optimization and may also be seen as a reduced thickness of the material).
After discretization of the physical domain $D$, the compliance optimization problem can be written as
\begin{align}\label{eq:compliance}
\min_{\mathbf{u}} \ \mathbb{E}[C(\mathbf{u};\xi)], \ C(\mathbf{u};\xi) & := \mathbf{f}^\top \mathbf{y}(\xi), \\
\mbox{s.t.} \ K(\mathbf{u}; \xi) \mathbf{y}(\xi) & = \mathbf{f}, \nonumber \\
\int_D u_h(x,\xi)d x  & = \bar V \cdot |D|, & \mbox{a.s.} \label{eq:volconstr} \\
0 \le u_h(x,\xi) & \le 1, & \mbox{a.s.} \label{eq:matconstr}
\end{align}
where $\mathbf{u} \in \mathbb{R}^{\hat N N}$ is the vector of nodal values of the material volume fraction $u(x,\xi)$,
$K(\mathbf{u}; \xi) \in \mathbb{R}^{\hat N \times \hat N}$ is the stiffness matrix of the linear elasticity model
\begin{align*}
 -\nabla \cdot (E(u; x,\xi) \nabla y(x,\xi)) & = f(x)
\end{align*}
subject to appropriate boundary conditions,
$\mathbf{f} \in \mathbb{R}^{\hat N}$ is the force vector,
$\mathbf{y}(\xi)  \in \mathbb{R}^{\hat N}$ is the displacement vector,
and $C(\mathbf{u};\xi) \in \mathbb{R}$ is the compliance.
The net Young's modulus $E(u; x,\xi)$ depends on the material distribution, and a random Young's modulus of the pure material \cite{Keshavarzzadeh-top-2021},
\begin{align*}
E(u; x,\xi) & = E_{\min} + \tilde u^3 \left(E_0 - E_{\min} + 0.02 \sum_{k=1}^d \sqrt{\lambda_k} \phi_k(x) \xi_k\right),
\end{align*}
where $E_0$ is the mean Young's modulus of the pure material, perturbed by a truncated Karhunen-Loeve expansion (KLE) of a random field with uniformly distributed $\xi_k \sim \mathcal{U}(-1,1)$ and a Gaussian covariance function,
with eigenvalues $\lambda_k$ sorted descending and corresponding eigenfunctions $\phi_k(x)$.
Specifically, truncating the KLE at the total variance error of $1\%$, we obtain $d=8$ random variables.
$E_{\min} = 10^{-9}$ is the Young's modulus of the void (taken nonzero to avoid the singularity of the stiffness matrix),
and
$$
\tilde u = (I - \eta \Delta)^{-1} u
$$
is the filtered material distribution, with $\Delta$ being the Laplace operator with natural boundary conditions, and $\eta>0$ is a small parameter.
We use $\eta=0.1$ throughout the experiments.
Similarly to \cite{Keshavarzzadeh-top-2021,Torres-topology-2021}, we consider the MBB beam optimization problem where $D = [0, n_x] \times [0, n_y]$ with $n_x/n_y=4$, such that $\hat N=n_x n_y = 4 n_y^2$, whereas the mesh size is fixed to 1.
Thus, in the numerical experiments, we will choose $n_y$ and set $n_x= 4 n_y$.
Also, the correlation length of the KLE is set to $n_y$.
We will consider two meshes: a coarse one with $n_y=25$, and a fine one with $n_y=100$.
The right hand side $f(x)$ applies a unit force downwards at the top center point of the domain and is zero otherwise, the bottom corners of the domain are fixed, and the rest of it is free.

To make the material distribution more stable with respect to random perturbations, we consider two penalties.
Firstly, as in \cite{Torres-topology-2021}, we penalize the standard deviation of the compliance, $\mathrm{Std}[C(\mathbf{u};\xi)]$.
Secondly, we consider the smoothed $L^1$-norm of the material distribution, $\mathcal{R}_{\varepsilon,h}(\mathbf{u})$, enforcing the sparsity (that is, absence) of the material shared across the random realizations.
The smoothing parameter is fixed to $\varepsilon=10^{-3}$.
We introduce two regularization parameters $\kappa \ge 0$ and $\beta \ge 0$, controlling the weight of each penalty, so instead of \eqref{eq:compliance} we consider the following objective function:
\begin{equation}\label{eq:compliance_pen}
\mathbb{E}[C(\mathbf{u};\xi)] + \kappa \cdot \mathrm{Std}[C(\mathbf{u};\xi)] + \mathcal{R}_{\varepsilon,h}(\mathbf{u}) \, ,
\end{equation}
augmented by the spatial constraints \eqref{eq:volconstr}--\eqref{eq:matconstr}.
The implementation is based on the \texttt{top88} Matlab code \cite{Andreassen-top88-2011}.
Specifically, we seek a TT approximation of the discretization coefficients of the filtered solution $\mathbf{\tilde u}$.
Since the structure of the problem \eqref{eq:compliance}--\eqref{eq:matconstr} is challenging for Newton's method, we resort to the simpler
projected gradient descent method with a fixed step size $\tau>0$.
In its $i$-th iteration
we use the block TT-Cross outlined in Section~\ref{sec:cross} to approximate directly the next iterate,
\begin{align*}
\mathbf{u}^{(i)} & = (I - \eta \Delta_h)\mathbf{\tilde u}^{(i)}, \\
\mathbf{\tilde u}^{(i+1)} & = (I - \eta \Delta_h)^{-1} \mathcal{P}\left(\mathbf{u}^{(i)} - \tau \nabla_{\mathbf{u}} \left[\mathbb{E}[C(\mathbf{u}^{(i)};\xi)] + \kappa \cdot \mathrm{Std}[C(\mathbf{u}^{(i)};\xi)] + \mathcal{R}_{\varepsilon,h}(\mathbf{u}^{(i)})\right]\right),
\end{align*}
where $\Delta_h$ is the finite element discretization of the Laplace operator in the filter, and $\mathcal{P}$ is the projection onto the constraints \eqref{eq:volconstr}--\eqref{eq:matconstr}.
For simplicity, we just apply the two constraints sequentially: firstly \eqref{eq:volconstr} is satisfied by the corresponding linear transformation of the solution, followed by clamping the solution pointwise to ensure \eqref{eq:matconstr}. Although the second step may disturb \eqref{eq:volconstr} in general, we observed that, starting from the constant initial guess $\mathbf{u}^{(0)} \equiv \bar V$ and using a small step size $\tau$, \eqref{eq:matconstr} is satisfied up to a small margin, decreasing with iterations, and hence eventually both constraints become accurate. A more sophisticated linear programming is possible. Crucially, both constraints \eqref{eq:volconstr}--\eqref{eq:matconstr} apply pointwise in $\xi$, and can thus be implemented inside the TT-Cross algorithm.
In each gradient descent iteration, we carry out 1 TT-Cross iteration initialized with the previous gradient descent iterate, truncating singular values of the block TT cores to the relative error threshold $10^{-2}$.
To ensure convergence we also do continuation in $\kappa$, increasing it gradually, by starting with $\kappa=0$, and incrementing $\kappa$ by $2\cdot 10^{-3}$ each iteration until it reaches the desired value.

\medskip
\noindent
\textbf{Numerical tests}.
For faster computations, we consider first a coarser grid of size $100 \times 25$.
For this grid, we choose the gradient step size $\tau = 3\cdot 10^{-3}$, which is about a good balance between speed and stability, and carry out 5000 iterations. For performance metrics, we consider the mean TT rank over iterations and cores,
$$
\langle r \rangle = \frac{1}{5000} \sum_{i=1}^{5000} \frac{1}{d-1} \sum_{k=1}^{d-1} r_k(\mathbf{\tilde u}^{(i)}),
$$
and the standard deviation of the thresholded material distribution $u_h(x,\xi)>1/2$ and its spatial average,
\begin{equation}
\mathcal{S}(x) := \mathrm{Std}[\mathbb{I}_{u_h(x,\xi) > 1/2}], \qquad \mathcal{\bar S} := \int_D \mathcal{S}(x) dx.
\label{eq:std-material}
\end{equation}
Since the indicator function $\mathbb{I}(x,\xi)$ is non-smooth, it may lack an efficient TT approximation. Here, we compute the standard deviation in \eqref{eq:std-material} by the Monte Carlo method using $1024$ samples from $\mathcal{U}([-1,1]^d)$. However, a more accurate TT-driven coordinate transformation for approximation of an indicator function is possible \cite{cds-dirt-rare-2024}.

In Figures~\ref{fig:top_coarse} and \ref{fig:top_coarse_xhigh} we compare all four combinations of penalties, composed from two options for the standard deviation penalty ($\kappa=0$ or $\kappa=3$) and the shared sparsity penalty ($\beta=0$ or $\beta=1$).
In Figure~\ref{fig:top_coarse_xhigh}, we see that the standard deviation penalty leads actually to a more random structure.
In Figure~\ref{fig:top_ttranks} we show the evolution of TT ranks of the solution $\mathbf{\tilde u}$ as the iteration progresses.
All penalties reduce the TT ranks due to reduced uncertainty, and the best performing methods are those with $\beta=1$.
\begin{figure}[htb]
\centering
\caption{Mean optimized topology with different penalties. Left top: $\beta=\kappa=0$ (no penalty). Right top: $\beta=1$, $\kappa=0$. Left bottom: $\beta=0$, $\kappa=3$. Right bottom: $\beta=1$, $\kappa=3$.}
\label{fig:top_coarse}
\begin{minipage}{0.49\linewidth}
\centering
\fbox{\includegraphics[width=0.96\linewidth]{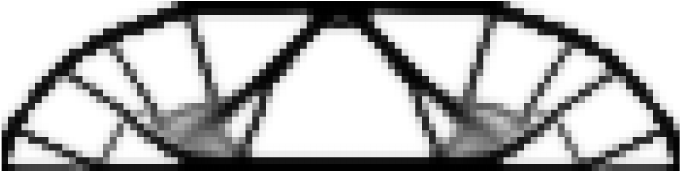}}

$\langle r\rangle = 7.88$ \quad $C=86.652\pm0.655$
\end{minipage}
\begin{minipage}{0.49\linewidth}
\centering
\fbox{\includegraphics[width=0.96\linewidth]{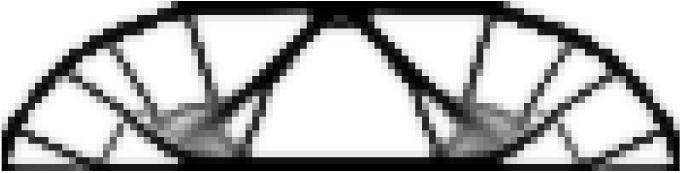}}

$\langle r\rangle = 1.39$ \quad $C=87.044\pm0.700$
\end{minipage}\\
\begin{minipage}{0.49\linewidth}
\centering
\fbox{\includegraphics[width=0.96\linewidth]{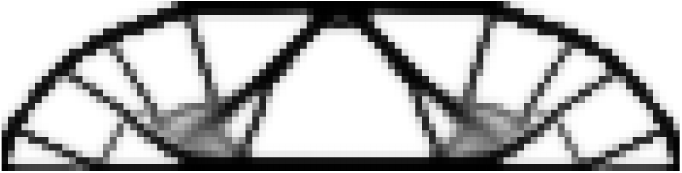}}

$\langle r\rangle = 2.03$ \quad $C=87.110\pm1.776$
\end{minipage}
\begin{minipage}{0.49\linewidth}
\centering
\fbox{\includegraphics[width=0.96\linewidth]{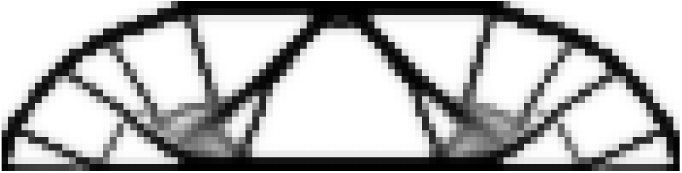}}

$\langle r\rangle = 2.33$ \quad $C=87.080\pm1.059$
\end{minipage}
\end{figure}
\begin{figure}[htb]
\centering
\caption{Standard deviation \eqref{eq:std-material} of the material distribution above $1/2$ with different penalties. Left top: $\beta=\kappa=0$ (no penalty). Right top: $\beta=1$, $\kappa=0$. Left bottom: $\beta=0$, $\kappa=3$. Right bottom: $\beta=1$, $\kappa=3$.}
\label{fig:top_coarse_xhigh}
\begin{minipage}{0.49\linewidth}
\centering
\fbox{\includegraphics[width=0.96\linewidth]{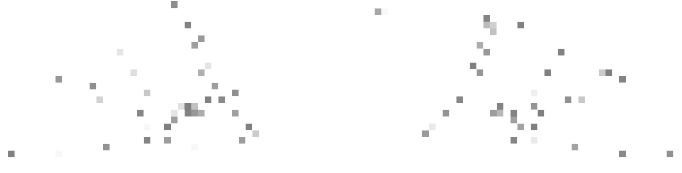}}

$\mathcal{\bar S} = $1.0276e-02
\end{minipage}
\begin{minipage}{0.49\linewidth}
\centering
\fbox{\includegraphics[width=0.96\linewidth]{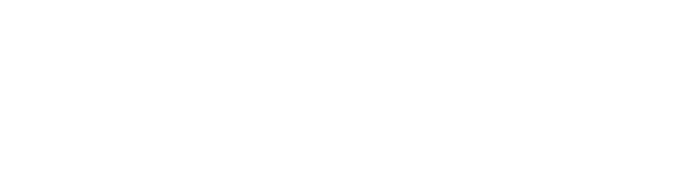}}

$\mathcal{\bar S} = $0
\end{minipage}\\
\begin{minipage}{0.49\linewidth}
\centering
\fbox{\includegraphics[width=0.96\linewidth]{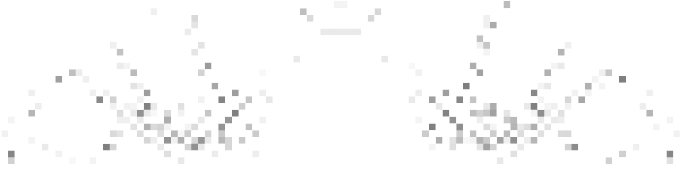}}

$\mathcal{\bar S} = $1.6618e-02
\end{minipage}
\begin{minipage}{0.49\linewidth}
\centering
\fbox{\includegraphics[width=0.96\linewidth]{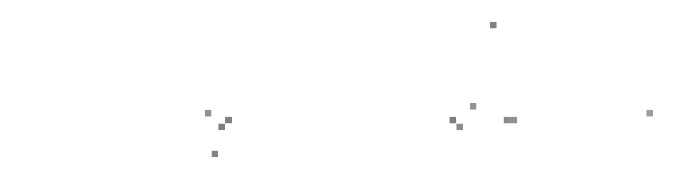}}

$\mathcal{\bar S} = $2.0142e-03
\end{minipage}
\end{figure}

\begin{figure}[htb]
\centering
\caption{Maximal TT ranks as a function of the iteration number}
\label{fig:top_ttranks}
\begin{tikzpicture}
\begin{axis}[%
xmode=log,
ymode=normal,
width=0.5\linewidth,
height=0.4\linewidth,
xlabel=iteration $(i)$,
ylabel=$\max_k r_k(\mathbf{\tilde u}^{(i)})$,
ymin=0,
legend style={at={(0.01,0.99)},anchor=north west},
]
\addplot+[no marks,dashed,line width=1pt] table[header=false,x index=0,y index=1]{coarse-ttranks.dat}; \addlegendentry{$\beta=0,\kappa=0$};
\addplot+[no marks] table[header=false,x index=0,y index=2]{coarse-ttranks.dat}; \addlegendentry{$\beta=1,\kappa=0$};
\addplot+[no marks,dotted,line width=2pt] table[header=false,x index=0,y index=3]{coarse-ttranks.dat}; \addlegendentry{$\beta=0,\kappa=3$};
\addplot+[no marks,line width=1pt] table[header=false,x index=0,y index=4]{coarse-ttranks.dat}; \addlegendentry{$\beta=1,\kappa=3$};
\end{axis}
\end{tikzpicture}
\end{figure}

Finally, we carry out the topology optimization experiment on a fine spatial grid of size $400\times 100$.
We increase the gradient step size to $\tau = 5 \cdot 10^{-2}$
to achieve the solution increments similar to those in the previous coarse-grid experiment,
which provides a well-converged solution after the same number of iterations of $5000$.
Since $\kappa>0$ did not improve sparsity or performance, we test only $\kappa=0$ here.
For $\beta=0$ (no penalty) the solution could not be completed due to the TT ranks exceeding $40$ after $893$ iterations, and the Matlab process crashing due to running out of 64Gb of memory.
For $\beta=1$ (enforcing shared sparsity), we managed to carry out $5000$ iterations, reaching the compliance $80.295\pm0.643$.
The TT rank has reached the maximum of $6$ actually in the first iterations, and then decreased, as shown in Figure~\ref{fig:top_fine} (right).
The mean material distribution is shown in Figure~\ref{fig:top_fine} (left).
The standard deviation of the thresholded solution averages to 3.5165e-05, which would give again an almost blank figure similarly to Fig.~\ref{fig:top_coarse_xhigh} (top right).
Again the penalty makes the material distribution more reproducible with respect to randomness.
\begin{figure}[htb]
\centering
\caption{Optimized topology on a fine $400\times 100$ grid. Left: $\mathbb{E}[\mathbf{\tilde u}(\xi)]$  with $\beta=1$. Right: maximal TT rank as a function of the iteration number (dashed: $\beta=0$, solid: $\beta=1$).}
\label{fig:top_fine}
\begin{minipage}{0.49\linewidth}
\centering
\fbox{\includegraphics[width=0.96\linewidth]{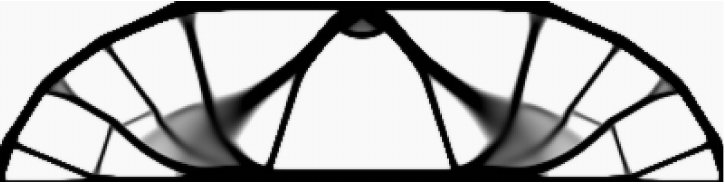}}

$\langle r\rangle = 1.01$, \quad $C=80.295\pm0.643$, \quad $\mathcal{\bar S} = $3.5165e-05
\end{minipage}
\begin{minipage}{0.49\linewidth}
\centering
\begin{tikzpicture}
\begin{axis}[%
xmode=log,
ymode=normal,
width=0.8\linewidth,
height=0.2\linewidth,
xlabel=iteration $(i)$,
ylabel=$\max_k r_k(\mathbf{\tilde u}^{(i)})$,
ymin=0,
ymax=15,
]
\addplot+[no marks,dashed,line width=1pt] table[header=false,x index=0,y index=1]{fine-beta0-ttranks.dat}; %\addlegendentry{$\beta=0$};
\addplot+[no marks] table[header=false,x index=0,y index=1]{fine-beta1-ttranks.dat}; %\addlegendentry{$\beta=1$};
\end{axis}
\end{tikzpicture}
\end{minipage}
\end{figure}

\section{Conclusion}
We have developed both first and approximate second order methods for the PDE-constrained optimization with a smoothed shared sparsity penalty.
For a nonzero smoothing parameter we obtain a linear convergence.
The error depends linearly on the smoothing parameter as well.
Smooth function approximations also converge sub-exponentially in the number of quadrature points in random parameters and TT ranks (exponentially if the function is analytic, and faster than any algebraic degree if the function is infinitely differentiable).
This makes the overall rate of convergence sub-exponential in the total computational cost.
This can be much faster than the algebraic rate of convergence of low-order and Monte Carlo methods,
as we demonstrated in the benchmark elliptic PDE example.
Moreover, a more structured solution with shared sparsity may actually exhibit lower TT ranks compared to the unconstrained solution.
This opens the way for the shared sparsity optimization in real-life applications, such as the topology optimization.

At the moment, we observe a linear convergence in $\varepsilon$, which is faster than the theoretically predicted rate of $\varepsilon^{-1/2}$.
It remains to a future research to obtain a sharp estimate of the convergence rate.

\appendix
\section{TT decomposition of the solution of the 1D elliptic PDE}\label{sec:1drank}
We are looking for the solution in the block TT format \eqref{eq:btt3},
$$
\begin{bmatrix}
\mathbf{y}(\xi) \\
\mathbf{u}(\xi) \\
\boldsymbol\lambda(\xi)
\end{bmatrix}
 =
\begin{bmatrix}
\mathbf{y}^{(1)}(\xi_1) \\
\mathbf{u}^{(1)}(\xi_1) \\
\boldsymbol\lambda^{(1)}(\xi_1)
\end{bmatrix}
u^{(2)}(\xi_2) u^{(3)}(\xi_3) u^{(4)}(\xi_4),
$$
as it is returned from the block TT cross.
The Gauss-Newton system corresponding to the elliptic PDE reads
\begin{align}\label{eq:kkt_app}
\begin{bmatrix}
 \mathbf{M}_y & 0 & \mathbf{A}(\xi_1) \\
 0 & \mathbf{M}_u & -\mathbf{B}^\top \\
 \mathbf{A}(\xi_1) & -\mathbf{B} & 0
\end{bmatrix}
\begin{bmatrix}
\mathbf{y}(\xi) \\
\mathbf{u}(\xi) \\
\boldsymbol\lambda(\xi)
\end{bmatrix}
=
\begin{bmatrix}
\mathbf{M}_y\mathbf{y}_d \\
0 \\
-\mathbf{g}(\xi_2) - \mathbf{b}_3(\xi_3) - \mathbf{b}_4(\xi_4)
\end{bmatrix},
\end{align}
where $\mathbf{M}_y$ is the mass matrix in state, independent of $\xi$,
$\mathbf{A}(\xi_1)$ is the symmetric stiffness matrix of $\nu(\xi_1) \Delta$ which depends only on $\xi_1$,
$\mathbf{M}_u$ is the control part of the approximate Hessian of the Lagrangian (independent of $\xi$),
$\mathbf{B}$ is the actuator matrix independent of $\xi$,
so is the desired state $\mathbf{y}_d$,
and $\mathbf{b}_3(\xi_3)$ and $\mathbf{b}_4(\xi_4)$ are the right hand sides accommodating left and right boundary conditions, respectively, each of which depends on either $\xi_3$ or $\xi_4$.
We split the state into 4 components: $\mathbf{y}(\xi) = \mathbf{y}_u(\xi) + \mathbf{y}_g(\xi) + \mathbf{y}_3(\xi) + \mathbf{y}_4(\xi)$, which satisfy the following equations:
\begin{align}\label{eq:yu_app}
\mathbf{A}(\xi_1) \mathbf{y}_u(\xi) & = \mathbf{B} \mathbf{u}(\xi), & \mathbf{A}(\xi_1) \mathbf{y}_g(\xi) & = -\mathbf{g}(\xi_2)\\
\mathbf{A}(\xi_1) \mathbf{y}_3(\xi) & = -\mathbf{b}_3(\xi_3), & \mathbf{A}(\xi_1) \mathbf{y}_4(\xi) & = -\mathbf{b}_4(\xi_4).
\end{align}
Since $\mathbf{A}(\xi_1)^{-1}$ acts linearly on a function independent of $\xi_1$,
we obtain rank-1 TT decompositions
\begin{align*}
\mathbf{y}_g(\xi) & = \mathbf{y}_g^{(1)}(\xi_1) y_g^{(2)}(\xi_2), & \mathbf{y}_3(\xi) & = \mathbf{y}_3^{(1)}(\xi_1) y_3^{(3)}(\xi_3), & \mathbf{y}_4(\xi) & = \mathbf{y}_4^{(1)}(\xi_1) y_4^{(4)}(\xi_4).
\end{align*}
From the first equation of \eqref{eq:kkt_app} and \eqref{eq:yu_app} we get
$$
\boldsymbol\lambda(\xi) = \mathbf{A}(\xi_1)^{-1} \left(\mathbf{M}_y\mathbf{y}_d - \mathbf{M}_y \mathbf{A}(\xi_1)^{-1} \mathbf{B} \mathbf{u}(\xi) - \mathbf{M}_y \mathbf{y}_g(\xi) - \mathbf{M}_y \mathbf{y}_3(\xi) - \mathbf{M}_y \mathbf{y}_4(\xi)\right),
$$
whereas from the second equation
$$
\mathbf{u}(\xi)  = \mathbf{M}_u^{-1} \mathbf{B}^\top \boldsymbol\lambda(\xi),
$$
which gives us the Schur complement depending only on $\xi_1$,
\begin{align*}
\underbrace{\left(\mathbf{I} + \mathbf{A}(\xi_1)^{-1} \mathbf{M}_y A(\xi_1)^{-1} \mathbf{B} \mathbf{M}_u^{-1} \mathbf{B}^\top\right)}_{\mathbf{S}(\xi_1)} \boldsymbol\lambda(\xi) & = \mathbf{A}(\xi_1)^{-1}\mathbf{M}_y \left(\mathbf{y}_d -  \mathbf{y}_g(\xi) -  \mathbf{y}_3(\xi) -  \mathbf{y}_4(\xi)\right).
\end{align*}
Defining
\begin{align*}
\mathbf{Q}(\xi_1) & = \mathbf{S}(\xi_1)^{-1} \mathbf{A}(\xi_1)^{-1}\mathbf{M}_y,
\end{align*}
we obtain the Lagrange multiplier in the explicit form
\begin{align*}
\boldsymbol\lambda(\xi) & = \mathbf{Q}(\xi_1) \left(\mathbf{y}_d -  \mathbf{y}_g(\xi) -  \mathbf{y}_3(\xi) -  \mathbf{y}_4(\xi)\right).
\end{align*}
Each summand in the right hand side of $\boldsymbol\lambda(\xi)$ is a rank-1 TT decomposition,
so $\boldsymbol\lambda(\xi)$ is a TT decomposition of ranks not greater than 4,
$$
\boldsymbol\lambda(\xi) = \boldsymbol\lambda^{(1)}(\xi_1) \lambda^{(2)}(\xi_2) \lambda^{(3)}(\xi_3) \lambda^{(4)}(\xi_4).
$$
So are $\mathbf{u}(\xi)$ and $\mathbf{y}_u(\xi)$, which moreover differ from $\boldsymbol\lambda(\xi)$ only in the first TT cores.
Finally, adding rank-1 $\mathbf{y}_g(\xi)$, $\mathbf{y}_3(\xi)$ and $\mathbf{y}_4(\xi)$,
we obtain that $\mathbf{y}(\xi)$ is a TT decomposition of ranks not greater than 7.
Specifically,
\begin{align*}
\begin{bmatrix}
\mathbf{y}(\xi) \\
\mathbf{u}(\xi) \\
\boldsymbol\lambda(\xi)
\end{bmatrix} & =
\begin{bmatrix}
\mathbf{A}(\xi_1)^{-1} \mathbf{B} \mathbf{M}_u^{-1} \mathbf{B}^\top  \boldsymbol\lambda^{(1)}(\xi_1) \\
\mathbf{M}_u^{-1} \mathbf{B}^\top  \boldsymbol\lambda^{(1)}(\xi_1) \\ \boldsymbol\lambda^{(1)}(\xi_1)
\end{bmatrix}
\lambda^{(2)}(\xi_2) \lambda^{(3)}(\xi_3) \lambda^{(4)}(\xi_4) \\
& +
\begin{bmatrix}
\mathbf{y}_g^{(1)}(\xi_1)\\
 0 \\
 0
\end{bmatrix}
 y_g^{(2)}(\xi_2)
 +
\begin{bmatrix}
\mathbf{y}_3^{(1)}(\xi_1)\\
 0\\
 0
\end{bmatrix}
 y_3^{(3)}(\xi_3) +
\begin{bmatrix}
\mathbf{y}_4^{(1)}(\xi_1)  \\
0 \\
0
\end{bmatrix}
y_4^{(4)}(\xi_4).
\end{align*}

\section*{Declarations}
\subsection*{Ethics approval and consent to participate}
Not applicable.
\subsection*{Consent for publication}
Not applicable.
\subsection*{Funding}
HA is partially supported by NSF grant DMS-2408877, the AirForce Office of Scientific Research under Award NO: FA9550-22-1-0248, and Office of Naval Research (ONR) under Award NO: N00014-24-1-2147.
SD is thankful for the support from Engineering and Physical Sciences Research
Council (EPSRC) New Investigator Award EP/T031255/1.
\subsection*{Availability of data and materials}
Not applicable.
\subsection*{Competing interests}
The authors declare that they have no competing interests.
\subsection*{Authors' contributions}
HA, SD, AO designed the study, performed the numerical experiments, analysed the results and wrote the manuscript. All authors read and approved the final manuscript.
\subsection*{Acknowledgements}
Not applicable.

\bibliographystyle{spmpsci}
\bibliography{refs}

\begin{thebibliography}{10}
\providecommand{\url}[1]{{#1}}
\providecommand{\urlprefix}{URL }
\expandafter\ifx\csname urlstyle\endcsname\relax
  \providecommand{\doi}[1]{DOI~\discretionary{}{}{}#1}\else
  \providecommand{\doi}{DOI~\discretionary{}{}{}\begingroup
  \urlstyle{rm}\Url}\fi

\bibitem{Ali_Ullmann_Hinze_2017}
Ahmad~Ali, A., Ullmann, E., Hinze, M.: Multilevel {M}onte {C}arlo analysis for
  optimal control of elliptic {PDE}s with random coefficients.
\newblock SIAM/ASA Journal on Uncertainty Quantification \textbf{5}(1),
  466--492 (2017).
\newblock \doi{10.1137/16M109870X}.
\newblock \urlprefix\url{https://doi.org/10.1137/16M109870X}

\bibitem{FAiraudo_HAntil_RLoehner_URakhimov_2024a}
Airaudo, F., Antil, H., L\"ohner, R., Rakhimov, U.: On the use of risk measures
  in digital twins to identify weaknesses in structures.
\newblock In: AIAA SCITECH 2024 Forum, p. 2622 (2024)

\bibitem{Andreassen-top88-2011}
Andreassen, E., Clausen, A., Schevenels, M., Lazarov, B.S., Sigmund, O.:
  Efficient topology optimization in {MATLAB} using 88 lines of code.
\newblock Structural and Multidisciplinary Optimization \textbf{43}(1), 1--16
  (2011).
\newblock \doi{10.1007/s00158-010-0594-7}

\bibitem{HAntil_2024a}
Antil, H.: Mathematical opportunities in digital twins ({MATH-DT}).
\newblock arXiv preprint arXiv:2402.10326  (2024)

\bibitem{Antil_Dolgov_Onwunta_Statecon2023}
Antil, H., Dolgov, S., Onwunta, A.: Smoothed {Moreau-Yosida} {Tensor Train}
  approximation of state-constrained optimization problems under uncertainty.
\newblock arXiv:2301.08684  (2023)

\bibitem{Antil_Dolgov_Onwunta_CVaR2023}
Antil, H., Dolgov, S., Onwunta, A.: T{TRISK}: tensor train decomposition
  algorithm for risk averse optimization.
\newblock Numerical Linear Algebra with Applications \textbf{30}(3), Paper No.
  e2481, 29 (2023).
\newblock \doi{10.1002/nla.2481}.
\newblock \urlprefix\url{https://doi.org/10.1002/nla.2481}

\bibitem{HAntil_DPKouri_MDLacasse_DRidzal_2018a}
Antil, H., Kouri, D., Lacasse, M.D., Ridzal, D. (eds.): Frontiers in
  {PDE}-constrained optimization, \emph{The IMA Volumes in Mathematics and its
  Applications}, vol. 163.
\newblock Springer, New York (2018).
\newblock \doi{10.1007/978-1-4939-8636-1}.
\newblock \urlprefix\url{https://doi.org/10.1007/978-1-4939-8636-1}.
\newblock Papers based on the workshop held at the Institute for Mathematics
  and its Applications, Minneapolis, MN, June 6--10, 2016

\bibitem{arulampalam-tutorial-2002}
Arulampalam, M.S., Maskell, S., Gordon, N., Clapp, T.: A tutorial on particle
  filters for online nonlinear/non-{Gaussian} {Bayesian} tracking.
\newblock IEEE Transactions on signal processing \textbf{50}(2), 174--188
  (2002)

\bibitem{Audouze-top-2023}
Audouze, C., Klein, A., Butscher, A., Morris, N., Nair, P., Yano, M.: Robust
  level-set-based topology optimization under uncertainties using anchored
  {ANOVA} {Petrov–Galerkin} method.
\newblock SIAM/ASA Journal on Uncertainty Quantification \textbf{11}(3),
  877--905 (2023).
\newblock \doi{10.1137/22M1524722}

\bibitem{Benner_Onwunta_Stoll_2016}
Benner, P., Onwunta, A., Stoll, M.: Block-diagonal preconditioning for optimal
  control problems constrained by {PDE}s with uncertain inputs.
\newblock SIAM Journal on Matrix Analysis and Applications \textbf{37}(2),
  491--518 (2016).
\newblock \doi{10.1137/15M1018502}.
\newblock \urlprefix\url{https://doi.org/10.1137/15M1018502}

\bibitem{Marzouk-stt-2016}
Bigoni, D., Engsig-Karup, A.P., Marzouk, Y.M.: Spectral tensor-train
  decomposition.
\newblock SIAM J. Sci. Comput. \textbf{38}(4), A2405--A2439 (2016).
\newblock \doi{10.1137/15M1036919}.
\newblock \urlprefix\url{https://doi.org/10.1137/15M1036919}

\bibitem{Chen_Haberman_Ghattas_2021}
Chen, P., Haberman, M.R., Ghattas, O.: Optimal design of acoustic metamaterial
  cloaks under uncertainty.
\newblock Journal of Computational Physics \textbf{431}, Paper No. 110114, 23
  (2021).
\newblock \doi{10.1016/j.jcp.2021.110114}.
\newblock \urlprefix\url{https://doi.org/10.1016/j.jcp.2021.110114}

\bibitem{Chen_Quarteroni_2014}
Chen, P., Quarteroni, A.: Weighted reduced basis method for stochastic optimal
  control problems with elliptic {PDE} constraint.
\newblock SIAM/ASA Journal on Uncertainty Quantification \textbf{2}(1),
  364--396 (2014).
\newblock \doi{10.1137/130940517}.
\newblock \urlprefix\url{https://doi.org/10.1137/130940517}

\bibitem{cds-dirt-rare-2024}
Cui, T., Dolgov, S., Scheichl, R.: Deep importance sampling using {Tensor
  Trains} with application to a priori and a posteriori rare events.
\newblock SIAM Journal on Scientific Computing \textbf{46}(1), C1--C29 (2024).
\newblock \doi{10.1137/23M1546981}

\bibitem{dklm-tt-pce-2015}
Dolgov, S., Khoromskij, B.N., Litvinenko, A., Matthies, H.G.: {P}olynomial
  {C}haos {E}xpansion of random coefficients and the solution of stochastic
  partial differential equations in the {T}ensor {T}rain format.
\newblock {SIAM} J. Uncertainty Quantification \textbf{3}(1), 1109--1135
  (2015).
\newblock \doi{10.1137/140972536}

\bibitem{ds-parcross-2020}
Dolgov, S., Savostyanov, D.: Parallel cross interpolation for high--precision
  calculation of high--dimensional integrals.
\newblock Comput. Phys. Commun. \textbf{246}, 106869 (2020).
\newblock \doi{10.1016/j.cpc.2019.106869}

\bibitem{dkos-eigb-2014}
Dolgov, S.V., Khoromskij, B.N., Oseledets, I.V., Savostyanov, D.V.: Computation
  of extreme eigenvalues in higher dimensions using block tensor train format.
\newblock Comput. Phys. Commun. \textbf{185}(4), 1207--1216 (2014).
\newblock \doi{10.1016/j.cpc.2013.12.017}

\bibitem{duswald2024finite}
Duswald, T., Keith, B., Lazarov, B., Petrides, S., Wohlmuth, B.: Finite
  elements for {M}at{\'e}rn-type random fields: Uncertainty in computational
  mechanics and design optimization.
\newblock Computer Methods in Applied Mechanics and Engineering \textbf{429},
  117146 (2024)

\bibitem{Surowiec-risk-2022}
Gahururu, D., Hinterm{\"u}ller, M., Surowiec, T.: Risk-neutral
  {PDE}-constrained generalized {Nash} equilibrium problems.
\newblock Mathematical Programming  (2022).
\newblock \doi{10.1007/s10107-022-01800-z}

\bibitem{Garreis_Ulbrich_2017}
Garreis, S., Ulbrich, M.: Constrained optimization with low-rank tensors and
  applications to parametric problems with {PDE}s.
\newblock SIAM Journal on Scientific Computing \textbf{39}(1), A25--A54 (2017).
\newblock \doi{10.1137/16M1057607}.
\newblock \urlprefix\url{https://doi.org/10.1137/16M1057607}

\bibitem{gostz-maxvol-2010}
Goreinov, S.A., Oseledets, I.V., Savostyanov, D.V., Tyrtyshnikov, E.E.,
  Zamarashkin, N.L.: How to find a good submatrix.
\newblock In: V.~Olshevsky, E.~Tyrtyshnikov (eds.) Matrix Methods: Theory,
  Algorithms, Applications, pp. 247--256. World Scientific, Hackensack, NY
  (2010)

\bibitem{Gorodetsky-ctt-2019}
Gorodetsky, A., Karaman, S., Marzouk, Y.: A continuous analogue of the
  tensor-train decomposition.
\newblock Comput. Methods Appl. Mech. Engrg. \textbf{347}, 59--84 (2019).
\newblock \doi{10.1016/j.cma.2018.12.015}.
\newblock \urlprefix\url{https://doi.org/10.1016/j.cma.2018.12.015}

\bibitem{hackbusch-2012}
Hackbusch, W.: Tensor Spaces And Numerical Tensor Calculus.
\newblock Springer--Verlag, Berlin (2012)

\bibitem{khor-low-rank-kron-P1-2006}
Hackbusch, W., Khoromskij, B.N.: Low-rank {Kronecker-product} approximation to
  multi-dimensional nonlocal operators. {I}. {Separable} approximation of
  multi-variate functions.
\newblock Computing \textbf{76}(3-4), 177--202 (2006).
\newblock \doi{10.1007/s00607-005-0144-0}

\bibitem{MR2914236}
Herzog, R., Stadler, G., Wachsmuth, G.: Directional sparsity in optimal control
  of partial differential equations.
\newblock SIAM J. Control Optim. \textbf{50}(2), 943--963 (2012).
\newblock \doi{10.1137/100815037}.
\newblock \urlprefix\url{https://doi.org/10.1137/100815037}

\bibitem{book::hpuu09}
Hinze, M., Pinnau, R., Ulbrich, M., Ulbrich, S.: Optimization with {PDE}
  constraints, \emph{Mathematical Modelling: Theory and Applications}, vol.~23.
\newblock Springer, New York (2009)

\bibitem{Surowiec-stability-2021}
Hoffhues, M., R{\"o}misch, W., Surowiec, T.M.: On quantitative stability in
  infinite-dimensional optimization under uncertainty.
\newblock Optimization Letters \textbf{15}(8), 2733--2756 (2021).
\newblock \doi{10.1007/s11590-021-01707-2}

\bibitem{holtz-ALS-DMRG-2012}
Holtz, S., Rohwedder, T., Schneider, R.: The alternating linear scheme for
  tensor optimization in the tensor train format.
\newblock SIAM J. Sci. Comput. \textbf{34}(2), A683--A713 (2012).
\newblock \doi{10.1137/100818893}.
\newblock \urlprefix\url{https://doi.org/10.1137/100818893}

\bibitem{jazwinski-stochastic-2007}
Jazwinski, A.H.: Stochastic processes and filtering theory.
\newblock Dover Publications Inc., NY (2007)

\bibitem{Keshavarzzadeh-top-2021}
Keshavarzzadeh, V., Kirby, R.M., Narayan, A.: Robust topology optimization with
  low rank approximation using artificial neural networks.
\newblock Computational Mechanics \textbf{68}(6), 1297--1323 (2021).
\newblock \doi{10.1007/s00466-021-02069-3}

\bibitem{KHRW13}
Kouri, D.P., Heinkenschloss, M., Ridzal, D., van Bloemen~Waanders, B.G.: A
  trust-region algorithm with adaptive stochastic collocation for {PDE}
  optimization under uncertainty.
\newblock SIAM Journal on Scientific Computing \textbf{35}(4), A1847 -- A1879
  (2013)

\bibitem{DPK2018}
Kouri, D.P., Shaprio, A.: Optimization of {PDE}s with uncertain inputs.
\newblock In: H.~Antil, D.P. Kouri, M.D. Lacasse, D.~Ridzal (eds.) Frontiers in
  PDE-Constrained Optimization, vol. 163, pp. 41 -- 81. Springer Verlag,
  Berlin, Heidelberg, New-York (2018)

\bibitem{KS16}
Kouri, D.P., Surowiec, T.M.: Risk-averse {PDE}-constrained optimization using
  the conditional value-at-risk.
\newblock SIAM J. Optim. \textbf{26}(1), 365--396 (2016).
\newblock \doi{10.1137/140954556}.
\newblock \urlprefix\url{https://doi.org/10.1137/140954556}

\bibitem{Li_Stadler_2019}
Li, C., Stadler, G.: Sparse solutions in optimal control of {PDE}s with
  uncertain parameters: the linear case.
\newblock SIAM Journal on Control and Optimization \textbf{57}(1), 633--658
  (2019).
\newblock \doi{10.1137/18M1181419}.
\newblock \urlprefix\url{https://doi.org/10.1137/18M1181419}

\bibitem{Lions71}
Lions, J.L.: Optimal control of systems governed by partial differential
  equations, \emph{Die Grundlehren der mathematischen Wissenschaften}, vol.
  Band 170.
\newblock Springer-Verlag, New York-Berlin (1971).
\newblock Translated from the French by S. K. Mitter

\bibitem{Martin_Nobile_2021}
Martin, M., Nobile, F.: P{DE}-constrained optimal control problems with
  uncertain parameters using {SAGA}.
\newblock SIAM/ASA Journal on Uncertainty Quantification \textbf{9}(3),
  979--1012 (2021).
\newblock \doi{10.1137/18M1224076}.
\newblock \urlprefix\url{https://doi.org/10.1137/18M1224076}

\bibitem{Milz_2023}
Milz, J.: Reliable error estimates for optimal control of linear elliptic
  {PDE}s with random inputs.
\newblock SIAM/ASA Journal on Uncertainty Quantification \textbf{11}(4),
  1139--1163 (2023).
\newblock \doi{10.1137/22M1503889}.
\newblock \urlprefix\url{https://doi.org/10.1137/22M1503889}

\bibitem{Negri_et_al_2013}
Negri, F., Rozza, G., Manzoni, A., Quarteroni, A.: Reduced basis method for
  parametrized elliptic optimal control problems.
\newblock SIAM Journal on Scientific Computing \textbf{35}(5), A2316--A2340
  (2013).
\newblock \doi{10.1137/120894737}.
\newblock \urlprefix\url{https://doi.org/10.1137/120894737}

\bibitem{osel-tt-2011}
Oseledets, I.V.: Tensor train decomposition.
\newblock SIAM J. Sci. Comp. \textbf{33}(5), 2295 -- 2317 (2011)

\bibitem{tt-toolbox}
Oseledets, I.V., Dolgov, S., Kazeev, V., Savostyanov, D., Lebedeva, O.,
  Zhlobich, P., Mach, T., Song, L.: {TT-Toolbox} (2016).
\newblock \urlprefix\url{https://github.com/oseledets/TT-Toolbox}.
\newblock Https://github.com/oseledets/TT-Toolbox

\bibitem{ot-ttcross-2010}
Oseledets, I.V., Tyrtyshnikov, E.E.: {TT-cross} approximation for
  multidimensional arrays.
\newblock Linear Algebra Appl. \textbf{432}(1), 70--88 (2010).
\newblock \doi{10.1016/j.laa.2009.07.024}

\bibitem{rdgs-tt-gauss-2022}
Rohrbach, P.B., Dolgov, S., Grasedyck, L., Scheichl, R.: Rank bounds for
  approximating {Gaussian} densities in the {Tensor-Train} format.
\newblock SIAM/ASA Journal on Uncertainty Quantification \textbf{10}(3),
  1191--1224 (2022).
\newblock \doi{10.1137/20M1314653}

\bibitem{so-dmrgi-2011proc}
Savostyanov, D.V., Oseledets, I.V.: Fast adaptive interpolation of
  multi-dimensional arrays in tensor train format.
\newblock In: Proceedings of 7th International Workshop on Multidimensional
  Systems (nDS). IEEE (2011).
\newblock \doi{10.1109/nDS.2011.6076873}

\bibitem{uschmajew-approx-rate-2013}
Schneider, R., Uschmajew, A.: Approximation rates for the hierarchical tensor
  format in periodic {S}obolev spaces.
\newblock J. Complexity  (2013).
\newblock \doi{10.1016/j.jco.2013.10.001}

\bibitem{schollwock-2005}
Schollw\"ock, U.: The density matrix renormalization group.
\newblock Rev. Mod. Phys. \textbf{77}(1), 259--315 (2005).
\newblock \doi{10.1103/RevModPhys.77.259}

\bibitem{GStadler_2009a}
Stadler, G.: Elliptic optimal control problems with {$L^1$}-control cost and
  applications for the placement of control devices.
\newblock Comput. Optim. Appl. \textbf{44}(2), 159--181 (2009).
\newblock \doi{10.1007/s10589-007-9150-9}.
\newblock \urlprefix\url{https://doi.org/10.1007/s10589-007-9150-9}

\bibitem{Tiesler_et_al_2012}
Tiesler, H., Kirby, R.M., Xiu, D., Preusser, T.: Stochastic collocation for
  optimal control problems with stochastic {PDE} constraints.
\newblock SIAM Journal on Control and Optimization \textbf{50}(5), 2659--2682
  (2012).
\newblock \doi{10.1137/110835438}.
\newblock \urlprefix\url{https://doi.org/10.1137/110835438}

\bibitem{Torres-topology-2021}
Torres, A.P., Warner, J.E., Aguil{\'o}, M.A., Guest, J.K.: Robust topology
  optimization under loading uncertainties via stochastic reduced order models.
\newblock International Journal for Numerical Methods in Engineering
  \textbf{122}(20), 5718--5743 (2021).
\newblock \doi{https://doi.org/10.1002/nme.6770}

\bibitem{Vogel-inv}
Vogel, C.R.: Computational Methods for Inverse Problems.
\newblock Society for Industrial and Applied Mathematics (2002).
\newblock \doi{10.1137/1.9780898717570}

\bibitem{white-dmrg-1993}
White, S.R.: Density matrix algorithms for quantum renormalization groups.
\newblock Phys. Rev. B \textbf{48}(14), 10345--10356 (1993).
\newblock \doi{10.1103/PhysRevB.48.10345}

\end{thebibliography}
\end{document}